\newtheorem{Theorem}{Theorem}[section]
\newtheorem{Lemma}[Theorem]{Lemma}
\newtheorem{Remark}{Remark}[section]
\title{Sobolev spaces for singular  perturbation of Laplace operator}
\author[V. Georgiev]{Vladimir Georgiev}
\address[V.Georgiev]{
Department of Mathematics,
University of Pisa,
Largo Bruno Pontecorvo 5,
I - 56127 Pisa, Italy}
\address{
Faculty of Science and Engineering, Waseda University,
3-4-1, Okubo, Shinjuku-ku, Tokyo 169-8555, Japan}
\address{
Institute of Mathematics and Informatics,  Bulgarian Academy of Sciences, Acad. G. Bonchev Str., Block 8, Sofia, 1113, Bulgaria
}
\email{georgiev@dm.unipi.it}
\author[M.Rastrelli]{Mario Rastrelli}
\address[M.Rastrelli]{
Department of Mathematics,
University of Pisa,
Largo Bruno Pontecorvo 5,
I - 56127 Pisa, Italy}
\thanks{
 V.G. and M.R. were partially supported by  GNAMPA 2023 and  by the project PRIN  2020XB3EFL by the Italian Ministry of Universities and Research.
V.G. was partially supported by    the Top Global University Project, Waseda University, by the University of Pisa, Project PRA 2022 85 and by Institute of Mathematics and Informatics, Bulgarian Academy of Sciences.
}
\subjclass{46E35. 47A60, 81Q15, 35Q41}
\keywords{singular perturbation of Laplace operator, Sobolev spaces, nonlinear Schr\"odinger equation
}
\begin{document}

\begin{abstract}
We study the perturbed Sobolev space $H^{1,r}_\alpha$, $r \in (1,\infty),$ associated with singular perturbation $\Delta_\alpha$ of Laplace operator in Euclidean space of dimension $2.$ The main results give the possibility to extend the $L^2$ theory of perturbed Sobolev space to  the $L^r$ case. When $r \in (2,\infty)$ we have appropriate representation of the functions in $H^{1,r}_\alpha$ in regular and singular part. An application to local well - posedness of the NLS associated with this singular perturbation in the mass critical and mass supercritical cases is established too.
\end{abstract}

\maketitle

\section{Introduction and basic definitions}

We study the singular-perturbed Laplacian  $-\Delta_{\alpha}$, $\alpha\in \mathbb{R}$, on $L^2(\mathbb{R}^2)$, that is a delta-like perturbation of the free Laplacian in $\mathbb{R}^2$.
This operator describes a zero-range interaction between particles or the presence of an impurity.
The parameter $\alpha$ expresses, in suitable units, the inverse scattering length of the interaction supported at $x_0=0$ and for   $\alpha=\infty$ it is the classical Laplacian on $L^2.$
 We mention that $-\Delta_{\alpha}$ is a non-trivial self-adjoint extension on $L^2$ of the symmetric operator
 $$-\Delta|_{C^\infty_0(\mathbb{R}^2\setminus\{0\})}.$$
This is today a well-known  one parameter class of operators, since the first rigorous attempt \cite{BF61} by Berezin and Faddeev in 1961, the seminal work \cite{AH81} by Albeverio and H\o{}egh-Krohn in 1981, and subsequent characterizations by many others authors.
In dimension $d=2,3$,  the explicit characterization of the domain $\mathcal{D}(-\Delta_\alpha)$ is well-known and consists of functions that can be decomposed in a $H^2$ function plus a singular one, the Green function. The similarities with the Sobolev spaces gave the name of the perturbed Sobolev space $H^2_\alpha=\mathcal{D}(-\Delta_\alpha)$. In \cite{GMS18} there is a focus on the operator $(-\Delta_\alpha)^{s/2},$ with $s\in (0,2),$  that provides the definition of the singular-perturbed Sobolev Spaces $H^s_\alpha=\mathcal{D}((-\Delta_\alpha)^{s/2})$. These spaces allow the study of PDEs like  nonlinear Schr\"odinger equations (NLSE) with a point interaction, thanks to the develop of tools like Strichartz estimates \cite{DMSY18,IS17}.

 One of the basic motivation of our work is to develop some missing tools of harmonic analysis needed to study dispersive equations associated with $-\Delta_{\alpha}$.

For example, the classical Nonlinear Schr\"odinger equation (NLSE)
\begin{equation}\label{eq.NLSEfr}
    ( i \partial_t + \Delta) u =\mu u|u|^{p-1}, \ p >1, \mu = \pm 1,
\end{equation}
is a well studied object and we can immediately refer to the book of T. Cazenave \cite{c} and list the following basic tools used to obtain local and global well - posedness for \eqref{eq.NLSEfr}:
\begin{enumerate}
    \item  Strichartz estimates for the linear Schr\"odinger group
$e^{it \Delta}$ using $L^q(0,T)L^r(\mathbb{R}^2)$ spaces with admissible couples $(q,r);$

    \item systematic use of the Sobolev spaces $W^{1,p}(\mathbb{R}^2) = H^{1,p}(\mathbb{R}^2)  $ and application of Strichartz estimates using these spaces. As a typical example we can recall the following estimate of the composition operator
    \begin{equation}\label{eq.co100}
        \|u|u|^{p-1}\|_{H^{1,r}(\mathbb{R}^2)} \lesssim \|u\|^p_{H^1(\mathbb{R}^2)}
    \end{equation}
for $r<2$ and $r$ sufficiently close to $2.$
\end{enumerate}

While Strichartz estimates for the  Schr\"odinger group
$e^{it \Delta_\alpha}$ are obtained recently by establishing the existence and completeness of the wave operators in $L^r$( see \cite{DMSY18}, \cite{CMY19}, \cite{CMY19b}), to our knowledge there is a lack  of results on the definition and properties of Sobolev spaces $H^{1,r}_\alpha(\mathbb{R}^2)$ associated with the perturbed operator $\Delta_\alpha.$  Let us mention that the necessity to have $L^p$ version of classical Sobolev spaces $H^{1,p}$ is dicussed  in \cite{GMS22}, \cite{GM22Ob}, \cite{FGI22}.

Our Theorems \ref{t.2.1} and \ref{t.2.2} will show a new description of $H^{1,p}_\alpha$ in dimension $d=2$, moving the focus also into $L^p$ spaces.  This is crucial because it allows to gain new Strichartz estimates that involve the spaces $H^{1,p}_\alpha$ and the energy space $H^1_\alpha$. Now the contraction argument is available and we can give a new proof of local well - posedness of the following NLSE:
\begin{equation}\label{eq.NLSE}
    ( i \partial_t + \Delta_\alpha) u =\mu u|u|^{p-1}, \ p >1, \mu = \pm 1.
\end{equation}

The key estimate we use in this local existence result is the following variant of \eqref{eq.co100}
\begin{equation}\label{eq.co119}
        \|u|u|^{p-1}\|_{H^{1,r}_\alpha(\mathbb{R}^2)} \lesssim \|u\|^p_{H^1_\alpha(\mathbb{R}^2)}
    \end{equation}
      for $r<2$ and $r$ sufficiently close to $2.$


\subsection{Overview on existing results}

First we give a short description on results treating the Strichartz estimates.
 for $-\Delta_\alpha$ in dimension $d=2.$

In the case of  dimension $d=2$ and when we have  $N$ singularities, the existence and completeness on $L^2(\mathbb R^2)$ of wave operators $W_{\alpha,N}^\pm$ is well-known thanks to Kato-Birman-Rosenblum theorem \cite{KATO66}. Using this result, Cornean, Michelangeli and Yajima in \cite{CMY19,CMY19b} defined a condition of regularity  for the singularities and proved that, under this condition (always fulfilled for $N=1$, that is our case), the wave operators are bounded in $L^p$ for every $p$. As a corollary they obtained the $L^{p^\prime}-L^{p}$ estimates without weights for every $p\in [2,\infty)$ and, immediately after, the Strichartz estimates:

\begin{equation}
    \begin{aligned}
& \left\| e^{i t \Delta_\alpha} P_{ac}f \right\|_{L^q(\mathbb{R}_t)L^p(\mathbb{R}^2)} \lesssim \|f\|_{L^2}, \\
& \left\|\int_0^t e^{i(t-\tau)\Delta_\alpha} P_{ac}F(\tau) d\tau\right\|_{L^q(\mathbb{R}_t)L^p(\mathbb{R}^2)} \lesssim  \left\| F\right\|_{L^{{s}'}(\mathbb{R}_t)L^{{r}'}(\mathbb{R}^2) },
    \end{aligned}
\end{equation}
where the couples $(p,q)$ and $(s,r)$ are 2-dimensional Strichartz exponents, i.e.
$$\frac{1}{p}+\frac{1}{q}=\frac{1}{2},\ \ 2<q\leq \infty.$$
For $N=1$ and the explicit structure of the absolutely continuous subspace for $-\Delta_\alpha$ allows to generalize the above inequalities local in time, without the orthogonal projection.
These results, with the perturbed Sobolev spaces develop, led to an in intense research in PDEs.

For the case $d=3$ we refer to \cite{DPT06} and \cite{IS17}.
In \cite{DMSY18} Dell'Antonio, Michelangeli, Scandone and Yajima proved that the wave operators associated to the pair $(-\Delta_\alpha,-\Delta)$, defined by the following strong limits,
$$W_{\alpha,N}^\pm=\lim_{t\to \pm\infty}e^{it\Delta_{\alpha,N}}e^{-it\Delta}$$
exist, they are complete in $L^2(\mathbb{R}^3)$ and are bounded on $L^q(\mathbb{R}^3)$ for $1<q<3$.

As a consequence,  the weighted estimate
\begin{equation}\label{eq.APT}
    \|w^{-1}e^{it\Delta_{\alpha,N}}P_{ac}f\|_{L^\infty(\mathbb{R}^3)}\leq {C}{t^{-\frac{3}{2}}}\|wf\|_{L^1(\mathbb{R}^3)}
\end{equation}
from \cite{DPT06}
can be extended in the $L^p-L^q$ setting with
\begin{equation}\label{eq.IS}
      \|e^{it\Delta_{\alpha,N}}P_{ac}f\|_{L^{p}(\mathbb{R}^3)}\leq {C}{t^{-\frac{3}{2}\left(\frac{1}{p^\prime}-\frac{1}{p}\right)}}\|f\|_{L^{p^\prime}(\mathbb{R}^3)}\ \ \mbox{for} \ \  p\in[2,3).
\end{equation}

The local well - posedness is a necessary element in the study of standing waves. Let us give a brief overview on the results in this direction.

The existence of standing waves for the  2d Hartree type equation with point interaction
is  studied in \cite{GMS22b}
\begin{equation}
    i\partial_tu=-\Delta_\alpha u +(w*|u|^2)u,
\end{equation}
where $w$ is a real-valued measurable function.

The existence and stability of standing waves for power nonlinearity is studied in \cite{FGI22} in all possible cases: mass critical, mass subcritical and subcritical ones. In alternative way in the mass subcritical case the ground states can be obtained by looking for constraint minimization of the energy. This approach is developed in \cite{ABCT}. The 3d case can be seen in \cite{ABCT22}.

Each of the works treating existence and stability/instability of standing waves need appropriate local well posedness in energy space. In all of them this is dome by appropriate modification of Cazenave approach by using a compactness argument.

It is not difficult (using only Strichartz estimates to establish
  local existence and uniqueness of a solution in $C([0,T]; L^2)$ in the mass sub-critical case (see  in Theorem \ref{t.le19} below).

  The main novelty is the prove of  the local existence and uniqueness of the solution of \eqref{eq.NLSE} in $C([0,T]; H^1_\alpha)$, for  the mass critical and super critical cases $(p\geq 3)$,  by using a classical contraction argument. The precise statement is given  in Theorem \ref{t.lems8}.

\subsection{Heuristic introduction of $\Delta_\alpha$}
An important feature of the family $-\Delta_{\alpha}$, $\alpha\in \mathbb{R}$, is the following explicit formula for the resolvent, valid for sufficiently large $\omega > 0$.
\begin{equation}\label{eq:res_formula}
(-\Delta_\alpha+\omega)^{-1} f\;=\;(-\Delta+\omega)^{-1}f + \frac{1}{\beta_{\alpha}(\omega) }\mathbb{G}_\omega \langle f, \mathbb{G}_\omega \rangle .
\end{equation}
Here $\beta_\alpha(\omega) = \alpha + c(\omega),$ where $c(\omega)$ is associated with the asymptotics of $\mathbb{G}_\omega(x) $ near the origin.
Identity \eqref{eq:res_formula} says that the resolvent of $-\Delta_\alpha$ is a rank-one perturbation of the free resolvent.

Formal substitution $$ \phi =(-\Delta_\alpha+\omega)^{-1} f,\ \  g= (-\Delta+\omega)^{-1}f$$ gives
\begin{equation}\label{eq.dom96}
    \phi = g + \frac{1}{\beta_{\alpha}(\omega) }\mathbb{G}_\omega \langle f, \mathbb{G}_\omega \rangle =
    g+ \frac{1}{\beta_{\alpha}(\omega) }\mathbb{G}_\omega \langle g, (\omega-\Delta) \mathbb{G}_\omega \rangle.
\end{equation}

Our  choice of the singular perturbation is determined by the requirement
\begin{equation}\label{eq.gom01}
    (\omega-\Delta) \mathbb{G}_\omega  = \delta.
\end{equation}
Hence,
    \begin{equation}\label{eq:defGlambda}
\mathbb{G}_\omega(x)\;= (2\pi)^{-1} K_{0}(\sqrt{\omega} |x|),
\end{equation}
where $K_{0}$ is modified Bessel function of order zero. The constant $\beta_{\alpha}(\omega) $ in \eqref{eq:res_formula} shall be determined by the special asymptotics of $K_0$ near the origin. We shall turn to this choice later on.

Therefore if we assume formally that $\omega - \Delta_\alpha$ is a self -adjoint positive operator (on $L^2$ for example and its resolvent is rank one perturbation of type \eqref{eq:res_formula} with $\mathbb{G}_\omega$ determined by \eqref{eq.gom01}, then we have
\begin{equation}\label{eq.dom107}
  \mathcal{D}(-\Delta_{\alpha}) = \left\{ \phi \in L^2;    \phi  =    g+ \frac{1}{\beta_{\alpha}(\omega) } g(0) \mathbb{G}_\omega, g \in H^2 \right\}.
\end{equation}

The space in \eqref{eq.dom107} shall be denoted by $H^2_\alpha(\mathbb{R}^2).$ Since this is a linear space we can write
\begin{equation}\label{eq.dom116}
 H^2_\alpha(\mathbb{R}^2) =  \mathcal{D}(-\Delta_{\alpha}) = \left\{ \phi \in L^2;    \phi  =    \beta_{\alpha}(\omega) g +  g(0) \mathbb{G}_\omega, g \in H^2 \right\}.
\end{equation}

Taking $g\in H^2$ such that $g(0)=1,$ we see that $G_\omega$ is in the domain of $\Delta_\alpha$ if and only if
\begin{equation}
    \beta_\alpha(\omega) = \alpha + c(\omega) =0.
\end{equation}
In this case, using \eqref{eq.dom116} and taking
$$ \phi  =    \beta_{\alpha}(\omega) g +  g(0) \mathbb{G}_\omega = \mathbb{G}_\omega.$$
we see that
$$ (\omega-\Delta_\alpha) \mathbb{G}_\omega =0.$$

To close our heuristic description  of $\Delta_\alpha$ let us note that \eqref{eq.dom96} implies
$$ (\omega- \Delta_\alpha) \phi = f = (\omega-\Delta)g.$$

\subsection{Precise definitions}
The above observation enables one to construct a rigorous definition of $\mathcal{D}(-\Delta_\alpha)$
and its action. Namely,
\begin{gather}
\label{eq:op_dom}\mathcal{D}(-\Delta_\alpha)\;=\;\Big\{\phi\in L^2(\mathbb{R}^2)\,\Big|\,\phi=g_{\alpha,\omega}+\frac{ g_{\alpha,\omega}(0)}{\beta_\alpha (\omega)}\,\mathbb{G}_\omega\textrm{ with }g_{\alpha,\omega}\in H^2(\mathbb{R}^2)\Big\},\\
\label{eq:opaction}
 (-\Delta_\alpha+\omega)\,\phi\;=\;(-\Delta+\omega)\,g_\omega\,,
\end{gather}
where $\omega>0$ is an arbitrarily fixed constant,
\begin{equation}\label{eq.2d1}
    \beta_\alpha(\omega):= \alpha + c(\omega), \ c(\omega)= \frac{\gamma}{2\pi} + \frac{1}{2\pi} \ln \left( \frac{\sqrt{\omega}}{2} \right),
\end{equation}
$\gamma$ denoting Euler-Mascheroni constant,
Owing to \eqref{eq:defGlambda}, we have the explicit formula
\begin{equation}\label{eq:defGlambda47}
\mathbb{G}_\omega(x)\;= (2\pi)^{-1} K_{0}(\sqrt{\omega} |x|).
\end{equation}

The results in \cite{AH81} guarantee the following statements:
\begin{itemize}
  \item we have the relation
  \begin{equation}\label{eq.h26}
      \mathcal{D}(-\Delta_\alpha) = H^2_\alpha(\mathbb{R}^2) = \left\{ \phi \in L^2; \phi = (\omega-\Delta_\alpha)^{-1}f, f \in L^2   \right\};
  \end{equation}
    \item the operator $\Delta_\alpha$ is self-adjoint, its spectrum consist of absolutely continuous part $(-\infty,0]$ and it has point eigenvalue at $\omega_0$ determined by $\alpha + c(\omega_0)=0,$ i.e.
    \begin{equation}\label{eq.pi1}
        \omega_0= 4 e^{-4\pi\alpha - 2\gamma};
    \end{equation}
        \item the domain and the action are independent of the choice of $\omega;$
    \item the resolvent identity \eqref{eq:res_formula} holds.
\end{itemize}
Summarizing, we can define the perturbed Sobolev space $H^2_\alpha(\mathbb{R}^2)=\mathcal{D}(-\Delta_\alpha)$ and moreover $(\omega-\Delta_\alpha)^{1/2}$ is well defined. In particular, as in the case of classical Sobolev spaces, we can define $H^1_\alpha(\mathbb{R}^2)=\mathcal{D}((\omega-\Delta_\alpha)^{1/2})$.
The space $H^1_\alpha(\mathbb{R}^2) $ is defined explicitly  in \cite{MOS-2018FP} as  follows
\begin{equation}\label{eq.defh1a81}
   H^1_\alpha(\mathbb{R}^2) = \left\{\phi = g + c \mathbb{G}_\omega, g \in H^1, c \in \mathbb{C}\right\}
\end{equation}
and the corresponding norm is
\begin{equation}\label{eq.ns77}
    \|\phi\|_{H^1_\alpha}^2 = \|g\|_{H^1}^2 + |c|^2.
\end{equation}

\subsection{$L^p$ extension of the resolvent}

First we define $H^{2,p}_\alpha$ by relation similar to \eqref{eq:op_dom}.

\begin{gather}
\label{eq:op_dom63}H^{2,p}_\alpha\;=\;\Big\{\phi\in L^p(\mathbb{R}^2)\,\Big|\,\phi=g_{\alpha,\omega}+\frac{ g_{\alpha,\omega}(0)}{\beta_\alpha (\omega)}\,\mathbb{G}_\omega\textrm{ with }g_{\alpha,\omega}\in H^{2,p}(\mathbb{R}^2)\Big\},\\
\label{eq:opaction67}
 (-\Delta_\alpha+\omega)\,\phi\;=\;(-\Delta+\omega)\,g_\omega\,,
\end{gather}
where $\omega > \omega_0 = 4 e^{-4\pi\alpha - 2\gamma} $ with $\omega_0$ being the unique eigenvalue of $\Delta_\alpha.$

Using the resolvent relation \eqref{eq:res_formula} one can easily obtain the following.

\begin{Lemma}
    If $\omega > \omega_0 = 4 e^{-4\pi\alpha - 2\gamma}, $
    the operator
    $$ (\omega - \Delta_\alpha)^{-1}$$ can be extended as a closed operator on $L^p.$
\end{Lemma}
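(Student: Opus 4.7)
The plan is to take the resolvent identity \eqref{eq:res_formula} itself as the \emph{definition} of the extension to $L^p$, and then to verify that each of the two summands on the right-hand side extends boundedly from $L^p(\mathbb{R}^2)$ to itself. Once this is done, closedness follows for free, since every bounded operator on a Banach space is closed. The hypothesis $\omega>\omega_0$ enters only to ensure that $\beta_\alpha(\omega)\neq 0$: by \eqref{eq.2d1} the function $\omega\mapsto c(\omega)$ is strictly increasing, and by \eqref{eq.pi1} its unique zero is at $\omega_0$, so $\omega>\omega_0$ yields $\beta_\alpha(\omega)>0$ and the denominator in \eqref{eq:res_formula} is harmless.

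The first step is to record the $L^q$-properties of $\mathbb{G}_\omega$. Because $\mathbb{G}_\omega(x)=(2\pi)^{-1}K_0(\sqrt{\omega}|x|)$ has only a logarithmic singularity at the origin and decays exponentially at infinity (both are standard asymptotics of $K_0$), one reads off $\mathbb{G}_\omega\in L^q(\mathbb{R}^2)$ for every $q\in[1,\infty)$. In particular, for each $p\in(1,\infty)$ one has simultaneously $\mathbb{G}_\omega\in L^1$, $\mathbb{G}_\omega\in L^p$ and $\mathbb{G}_\omega\in L^{p'}$.

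The second step handles the two summands. The free resolvent $(-\Delta+\omega)^{-1}$ is convolution with $\mathbb{G}_\omega$, so $\mathbb{G}_\omega\in L^1$ and Young's inequality give a bounded operator on $L^p$ for every $p\in[1,\infty]$. The rank-one correction
\[
f\;\longmapsto\;\tfrac{1}{\beta_\alpha(\omega)}\,\langle f,\mathbb{G}_\omega\rangle\,\mathbb{G}_\omega
\]
is well-defined by H\"older (using $\mathbb{G}_\omega\in L^{p'}$) and sends $L^p$ to the one-dimensional subspace $\mathbb{C}\,\mathbb{G}_\omega\subset L^p$ (using $\mathbb{G}_\omega\in L^p$), hence is bounded on $L^p$. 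Summing, one obtains a bounded operator $R_\omega^{\alpha}\colon L^p\to L^p$ which agrees with $(\omega-\Delta_\alpha)^{-1}$ on the dense subspace $L^2\cap L^p$; boundedness on $L^p$ implies closedness, and the lemma is proved.

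There is no real obstacle here: the argument is essentially bookkeeping around the formula \eqref{eq:res_formula}, with the only analytic input being the two asymptotics of $K_0$ used to place $\mathbb{G}_\omega$ simultaneously in $L^1$, $L^p$ and $L^{p'}$. What the lemma really does is provide the correct \emph{definition} of the extended resolvent, on which the subsequent $L^p$-theory of $H^{1,r}_\alpha$ and $H^{2,r}_\alpha$ will rest; the delicate points (independence of $\omega$, mapping properties into $H^{2,p}$, decomposition of the domain) are postponed to Theorems~\ref{t.2.1}--\ref{t.2.2}.
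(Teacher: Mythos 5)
Your proof is correct and follows exactly the route the paper intends: the lemma is stated in the paper with only the remark that it follows ``easily'' from the resolvent identity \eqref{eq:res_formula}, and your argument --- Young's inequality for the convolution with $\mathbb{G}_\omega\in L^1$, H\"older plus $\mathbb{G}_\omega\in L^{p'}\cap L^p$ for the rank-one term, and $\beta_\alpha(\omega)\neq 0$ from $\omega>\omega_0$ --- is precisely the bookkeeping the authors leave to the reader. Nothing to add.
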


Next, we define the spaces $H^{1,p}_\alpha$ for  $p \in (1,\infty).$

\begin{equation}\label{d}
  H^{1,p}_\alpha(\mathbb{R}^2) = \left\{\phi \in L^p; \phi = (\omega - \Delta_\alpha)^{-1/2}f, f \in L^p \right\} .
\end{equation}

It is not difficult to check the following
\begin{Lemma}\label{l.sem16}
   For any $q \in (2,\infty)$ there the space $H^2_\alpha$ is dense in $H^{1,q}.$
\end{Lemma}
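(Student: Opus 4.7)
The plan is to exploit the fact that, by the very definition \eqref{d}, the map
\[
J_q : L^q(\mathbb{R}^2) \longrightarrow H^{1,q}_\alpha(\mathbb{R}^2), \qquad J_q(f) := (\omega-\Delta_\alpha)^{-1/2} f,
\]
is a bijection, and the natural norm on $H^{1,q}_\alpha$ (inherited from this construction) is just $\|\phi\|_{H^{1,q}_\alpha}=\|f\|_{L^q}$. Under this identification, density of $H^2_\alpha$ in $H^{1,q}_\alpha$ reduces to density in $L^q$ of the preimage $J_q^{-1}(H^2_\alpha)$, and the task becomes identifying which $f\in L^q$ get mapped into $H^2_\alpha$ by $J_q$.

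The characterization is the following: $(\omega-\Delta_\alpha)^{-1/2}f$ lies in $H^2_\alpha=\mathcal{D}(-\Delta_\alpha)$ (the $L^2$-domain) if and only if $f\in\mathcal{D}((\omega-\Delta_\alpha)^{1/2})=H^1_\alpha(\mathbb{R}^2)$. Indeed, given $f\in H^1_\alpha\subset L^2$, the functional calculus on $L^2$ yields $\phi_f:=(\omega-\Delta_\alpha)^{-1/2}f\in L^2$ with
\[
(\omega-\Delta_\alpha)\phi_f=(\omega-\Delta_\alpha)^{1/2}f\in L^2,
\]
so $\phi_f\in H^2_\alpha$; the converse is analogous. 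Hence it suffices to prove that $L^q\cap H^1_\alpha$ is dense in $L^q$.

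For this, by the explicit splitting \eqref{eq.defh1a81} we have $H^1(\mathbb{R}^2)\subset H^1_\alpha(\mathbb{R}^2)$ (take $c=0$), and in particular $C^\infty_c(\mathbb{R}^2)\subset H^1_\alpha$. Since $C^\infty_c(\mathbb{R}^2)$ is dense in $L^q(\mathbb{R}^2)$ for every $q\in(2,\infty)$, given $\phi=J_q(f)\in H^{1,q}_\alpha$ I pick a sequence $f_n\in C^\infty_c(\mathbb{R}^2)$ with $f_n\to f$ in $L^q$. Then $\phi_n:=J_q(f_n)=(\omega-\Delta_\alpha)^{-1/2}f_n\in H^2_\alpha$ by the characterization above, and
\[
\|\phi_n-\phi\|_{H^{1,q}_\alpha}=\|f_n-f\|_{L^q}\longrightarrow 0.
\]

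The only subtle point I anticipate is justifying the identification of $H^{1,q}_\alpha$ with $L^q$ via $J_q$ in a norm-sense (so that approximation in $L^q$ transfers to approximation in $H^{1,q}_\alpha$); this follows once one observes that $J_q$ is well-defined and bijective by \eqref{d} and agrees with the $L^2$ functional calculus on $L^q\cap L^2$, which is a dense subspace of $L^q$. The rest of the argument is routine and does not require the more delicate $L^p$-boundedness properties of fractional powers developed later in the paper.
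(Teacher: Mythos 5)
Your argument is correct and rests on the same transfer principle as the paper's proof --- identify $H^{1,q}_\alpha$ with $L^q$ through the map $f \mapsto (\omega-\Delta_\alpha)^{-1/2}f$ of \eqref{d} and push a dense family of ``good'' inputs $f$ forward --- but the regularizing device differs. The paper approximates a general $f \in L^q$ by the Yosida-type resolvent regularization $f_\varepsilon = (\omega - \varepsilon\Delta_\alpha)^{-1}f$, which keeps the construction inside the $L^q$ calculus; you instead use that $C^\infty_c \subset H^1 \subset H^1_\alpha$ (taking $c=0$ in \eqref{eq.defh1a81}) is dense in $L^q$, together with the observation that $(\omega-\Delta_\alpha)^{-1/2}$ maps $H^1_\alpha = \mathcal{D}((\omega-\Delta_\alpha)^{1/2})$ into $H^2_\alpha = \mathcal{D}(\omega-\Delta_\alpha)$ by the $L^2$ spectral theorem. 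Your variant has the advantage that the approximants lie in $L^2\cap L^q$, so the membership $\phi_n\in H^2_\alpha$ is immediate from the $L^2$ functional calculus and the only consistency to check is that the $L^q$ extension of $(\omega-\Delta_\alpha)^{-1/2}$ agrees with the $L^2$ one on $L^2\cap L^q$ (which you correctly flag, and which follows from the representation \eqref{eq.Darr}); the paper's Yosida approximants a priori live only in the $L^q$-domain, so the passage to the $L^2$-based space $H^2_\alpha$ is left more implicit there.

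One step you omit that the paper does address: the assertion ``$H^2_\alpha$ is dense in $H^{1,q}_\alpha$'' presupposes the inclusion $H^2_\alpha \subset H^{1,q}_\alpha$, i.e.\ that every $\phi \in H^2_\alpha$ can be written as $(\omega-\Delta_\alpha)^{-1/2}f$ with $f\in L^q$ and $\phi\in L^q$. This is exactly where the Sobolev embedding of Lemma \ref{l.sem1} enters: for $\phi\in H^2_\alpha$ one has $f=(\omega-\Delta_\alpha)^{1/2}\phi\in H^1_\alpha\hookrightarrow L^q$ for $q\in(2,\infty)$, and likewise $\phi\in H^1_\alpha\hookrightarrow L^q$. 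Your argument proves density of a subset of $H^2_\alpha\cap H^{1,q}_\alpha$, which is the substantive content, but you should add this one-line embedding to obtain the statement as claimed.
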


\begin{proof}
It is sufficient to use the definition of perturbed Sobolev space, the Sobolev embedding of Lemma \ref{l.sem1} so we can conclude that $H^2_\alpha$ is embedded in $H^{1,q}_\alpha.$ The density property follows from the fact that any $f \in L^q$ can be approximated by
$$ f_\varepsilon = (\omega-\varepsilon \Delta_\alpha)^{-1} f$$ as $\varepsilon \to 0.$ Therefore the Yosida approximation completes the proof.
\end{proof}

The key questions we discuss are:

\begin{enumerate}
    \item The space $H^2_\alpha$ is dense in $H^{1,p}_\alpha$ for  $p \in (1,\infty)$?

    \item Can we extend the characterization of $H^{1,p}_\alpha$  in a way similar to the relation
    $$ \phi=g+\frac{ g(0)}{\beta_\alpha (\omega)}\,\mathbb{G}_\omega$$
        used in \eqref{eq:op_dom63}, when  $p>2$?

        \item Can we say that $H^{1,p}$ and $H^{1,p}_\alpha$ coincide for $p \in (1,2)$ ?
\end{enumerate}

\section{Main results}

Our main result is quite similar to the $L^2$ case studied in \cite{GMS18}.

First of all we have the representation of the operator $ (\omega - \Delta_\alpha)^{-1/2}$ used in the definition of $H^{1,p}_\alpha.$
\begin{equation} \label{eq.Darr}
    \begin{aligned}
         (\omega - \Delta_\alpha)^{-1/2}f =  (\omega - \Delta)^{-1/2}f + \frac{1}{\pi}\int_0^\infty t^{-1/2} \mathbb{G}_{\omega+t}(|x|) \langle \mathbb{G}_{\omega+t}, f \rangle \frac{dt}{\beta_\alpha(\omega+t)}
    \end{aligned}
\end{equation}

To justify it we use the relation (4.7)  in \cite{GMS18} and we can write
\begin{equation}
    (\omega-\Delta_\alpha)^{-1/2} = \frac{1}{\pi}\int_0^\infty t^{-1/2} (\omega+t-\Delta_\alpha)^{-1} dt
\end{equation}
Using the relation \eqref{eq:res_formula} we get \eqref{eq.Darr}.

Our main canonical representation of this operator is closely connected with the following asymptotic representation of $\mathbb{G}_\omega(r)$.
\begin{equation}\label{eq.decomposition}
    \mathbb{G}_\omega(r) = \varphi_0(\sqrt{\omega}r) + R(\sqrt{\omega}r),
\end{equation}
where
\begin{equation}\label{eq.vp08}
    \varphi_0(r) =  - (2\pi)^{-1}\left( \log \left( r/2\right) + \gamma  \right) \phi(r),
\end{equation}
$\phi$ is a smooth non - negative function, such that
\begin{equation}
    \phi(r) =
  \left\{  \begin{aligned}
       & 1, \ \ \mbox{if} \ \ r<1;\\
       & 0, \ \ \mbox{if} \ \ r>2.
    \end{aligned}\right.
\end{equation}

Our first result is the following.

\begin{Theorem} \label{t.2.1}
  We have the following properties:
  \begin{itemize}
      \item if $p>2$ and $f \in L^p,$ then there exists a unique $g \in H^{1,p}(\mathbb{R}^2)$ so that
      \begin{equation}
          (\omega - \Delta_\alpha)^{-1/2}f = g + \varphi_0(\sqrt{\omega}r) C(f),
      \end{equation}
      where $C(f)$ is the linear functional defined by
      \begin{equation}
       C(f) =   \frac{1}{\pi}\int_0^\infty t^{-1/2}  \langle \mathbb{G}_{\omega+t}, f \rangle \frac{dt}{\beta_\alpha(\omega+t)}
      \end{equation}
      that is bounded functional on $L^p:$
      \item if $p \in (1,2),$ then $$ (\omega - \Delta_\alpha)^{-1/2}f  \in H^{1,p}(\mathbb{R}^2)$$
      and we have the estimate
      \begin{equation}\label{eq.stimaellittica}
          \|(\omega-\Delta)^{1/2} (\omega-\Delta_\alpha)^{-1/2}f\|_{L^p} \lesssim \|f\|_{L^p}.
      \end{equation}
  \end{itemize}
\end{Theorem}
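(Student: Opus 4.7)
\textbf{Plan for Theorem \ref{t.2.1}.}
The starting point is the integral representation \eqref{eq.Darr}. Using the decomposition \eqref{eq.decomposition} at the shifted parameter, $\mathbb{G}_{\omega+t}(r)=\varphi_0(\sqrt{\omega+t}\,r)+R(\sqrt{\omega+t}\,r)$, together with the trivial split
\[
\varphi_0(\sqrt{\omega+t}\,r)=\varphi_0(\sqrt{\omega}\,r)+\bigl[\varphi_0(\sqrt{\omega+t}\,r)-\varphi_0(\sqrt{\omega}\,r)\bigr],
\]
one pulls the only $t$-independent singular factor $\varphi_0(\sqrt{\omega}\,r)$ out of the $t$-integral, and identifies its coefficient as $C(f)$. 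What remains is a candidate $g$ consisting of $(\omega-\Delta)^{-1/2}f$ plus Bochner integrals in $t$ of $R(\sqrt{\omega+t}\,\cdot)$ and of the bracket above, against the density $t^{-1/2}\langle\mathbb{G}_{\omega+t},f\rangle/\beta_\alpha(\omega+t)$. The proof then reduces to (i) showing that $C$ is a bounded linear functional on $L^p$ for $p>2$, and (ii) showing that $g\in H^{1,p}$ with $\|g\|_{H^{1,p}}\lesssim\|f\|_{L^p}$.

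For (i), by H\"older, $|\langle\mathbb{G}_{\omega+t},f\rangle|\le\|\mathbb{G}_{\omega+t}\|_{L^{p'}}\|f\|_{L^p}$, and a dilation $y=\sqrt{\omega+t}\,x$ yields $\|\mathbb{G}_{\omega+t}\|_{L^{p'}}\simeq(\omega+t)^{-1/p'}\|K_0\|_{L^{p'}(\mathbb{R}^2)}$, the latter finite for every $p'\in(1,\infty)$ because $K_0$ is only logarithmically singular at $0$ and exponentially decaying at infinity. For $\omega>\omega_0$ the factor $\beta_\alpha(\omega+t)^{-1}$ is bounded on $[0,\infty)$ and behaves like $(\log t)^{-1}$ as $t\to\infty$, so the $t$-integral $\int_0^\infty t^{-1/2}(\omega+t)^{-1/p'}\,dt/\beta_\alpha(\omega+t)$ converges iff $\tfrac{1}{2}+\tfrac{1}{p'}>1$, i.e.\ iff $p>2$. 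This is exactly the range in which $C$ is asserted to be bounded.

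For (ii), by the definition \eqref{d} one needs $(\omega-\Delta)^{1/2}g\in L^p$. The classical piece $(\omega-\Delta)^{-1/2}f$ is immediate. The $R$-piece uses the smoothness and exponential decay of $R$: a rescaling argument bounds $\|(\omega-\Delta)^{1/2}R(\sqrt{\omega+t}\,\cdot)\|_{L^p}$ with enough decay in $t$ to close the integral when paired against $\|\mathbb{G}_{\omega+t}\|_{L^{p'}}\simeq(\omega+t)^{-1/p'}$. The central technical estimate concerns the bracket $\varphi_0(\sqrt{\omega+t}\,r)-\varphi_0(\sqrt{\omega}\,r)$, which is compactly supported, $O(\log(1+t))$ in sup-norm, and whose gradient is singular like $1/r$ on an annulus of scale $(\omega+t)^{-1/2}$; the cancellation of the two $1/r$ singularities, combined with the shrinking support, gives an $L^p$ bound with polynomial decay in $(\omega+t)$ and at most logarithmic growth in $t$, enough to close the Bochner integral. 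For $p\in(1,2)$ the argument is even simpler, because $\varphi_0(\sqrt{\omega}\,\cdot)\in H^{1,p}$ (its gradient, behaving as $1/r$ on a bounded set, is in $L^p$ exactly when $p<2$), so the singular term is absorbed into $g$ and \eqref{eq.stimaellittica} follows by combining all bounds.

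The main obstacle is the precise scaling estimate on $\varphi_0(\sqrt{\omega+t}\,r)-\varphi_0(\sqrt{\omega}\,r)$ in $H^{1,p}$: one has to track how the cancellation of the $\log r$ singularities and the effective support of size $(\omega+t)^{-1/2}$ combine to give $L^p$-decay in $t$ strong enough to overcome the $t^{-1/2}$ singularity at zero and to match the $L^{p'}$-scaling of $\mathbb{G}_{\omega+t}$. This is also where the dichotomy visibly splits: for $p>2$ a genuine singular component $\varphi_0(\sqrt{\omega}\,r)C(f)$ survives, while for $p<2$ it is absorbed. Uniqueness in the first part is immediate, since the difference of two decompositions would be a nonzero multiple of $\varphi_0(\sqrt{\omega}\,\cdot)$ lying in $H^{1,p}$, contradicting the fact that $\nabla\varphi_0(\sqrt{\omega}\,\cdot)\sim 1/r$ is not in $L^p$ for $p\ge 2$.
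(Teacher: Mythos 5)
Your plan coincides with the paper's own proof: the same splitting of $\mathbb{G}_{\omega+t}$ into $\varphi_0(\sqrt{\omega+t}\,r)+R(\sqrt{\omega+t}\,r)$, the same extraction of the $t$-independent factor $\varphi_0(\sqrt{\omega}\,r)C(f)$, and the same reduction to $H^{1,p}$-bounds for the $R$-integral and the bracket $\varphi_0(\sqrt{\omega+t}\,r)-\varphi_0(\sqrt{\omega}\,r)$ (the paper's operators $\Gamma$, $\Gamma_0$, $\Gamma_1$ of Section~\ref{s.lp}), with the identical convergence computation for $C(f)$ and the identical uniqueness argument via $\nabla\varphi_0\sim 1/r\notin L^p$ for $p\ge 2$. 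The approach is correct and essentially the same as the paper's.
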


By using the approximation
$$  (\omega-\Delta_\alpha)^{-1/2}f = \lim_{\varepsilon \to 0}   (\omega-\Delta_\alpha)^{-1/2} f_\varepsilon, \ \ f_\varepsilon = \omega^{1/2} (\omega-\varepsilon \Delta_\alpha)^{-1/2}f $$
we obtain the following.

\begin{Theorem}\label{t.2.2}
 If $p>2$ and $f \in L^p,$ then there exists a unique $g \in H^{1,p}(\mathbb{R}^2)$ so that
      \begin{equation}\label{eq.rappresentazione}
          (\omega - \Delta_\alpha)^{-1/2}f =g+\frac{ g(0)}{\beta_\alpha (\omega)}\,\mathbb{G}_\omega.
      \end{equation}
\end{Theorem}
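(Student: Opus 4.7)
The plan is to derive the representation \eqref{eq.rappresentazione} by rewriting the one provided by Theorem~\ref{t.2.1}, using the decomposition $\mathbb{G}_\omega = \varphi_0 + R$ in \eqref{eq.decomposition}, and then pinning down the coefficient of the singular part through a smoothing argument that relates it to a pointwise trace of the regular part at the origin.

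From Theorem~\ref{t.2.1}, for $f \in L^p$ with $p > 2$ we already have
\begin{equation*}
(\omega - \Delta_\alpha)^{-1/2}f = g_0 + C(f)\,\varphi_0(\sqrt{\omega}|\cdot|), \qquad g_0 \in H^{1,p}(\mathbb{R}^2).
\end{equation*}
Since $R(\sqrt{\omega}|\cdot|)$ is smooth on $\mathbb{R}^2$ and decays exponentially, it lies in $H^{1,p}$ for every $p$, so setting
\begin{equation*}
g := g_0 - C(f)\,R(\sqrt{\omega}|\cdot|) \in H^{1,p}(\mathbb{R}^2)
\end{equation*}
produces the cleaner decomposition $(\omega - \Delta_\alpha)^{-1/2}f = g + C(f)\,\mathbb{G}_\omega$. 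The Sobolev embedding $H^{1,p}(\mathbb{R}^2) \hookrightarrow C^0$, valid for $p > 2$, gives meaning to $g(0)$, so the entire theorem reduces to the identity $g(0) = \beta_\alpha(\omega)\,C(f)$.

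To prove this identity I would use the smoothing family $f_\varepsilon := \omega^{1/2}(\omega - \varepsilon \Delta_\alpha)^{-1/2}f$. A Yosida-type argument combining the $L^p$-resolvent extension (the unnumbered Lemma preceding Lemma~\ref{l.sem16}) with the scaling identification between $(\omega - \varepsilon\Delta_\alpha)^{-1/2}$ and $(\omega/\varepsilon - \Delta_\alpha)^{-1/2}$ delivers $f_\varepsilon \to f$ in $L^p$ as $\varepsilon \to 0$, and moreover places $(\omega - \Delta_\alpha)^{-1/2}f_\varepsilon$ in $H^{2,p}_\alpha$, so that \eqref{eq:op_dom63} applies and gives
\begin{equation*}
(\omega - \Delta_\alpha)^{-1/2}f_\varepsilon = h_\varepsilon + \frac{h_\varepsilon(0)}{\beta_\alpha(\omega)}\,\mathbb{G}_\omega, \qquad h_\varepsilon \in H^{2,p}(\mathbb{R}^2).
\end{equation*}
Applying the rewriting of the first paragraph to $f_\varepsilon$ yields a competing decomposition $g_\varepsilon + C(f_\varepsilon)\,\mathbb{G}_\omega$ of the same function; since $\mathbb{G}_\omega \notin H^{1,p}$ for $p \geq 2$ (its gradient behaves like $|x|^{-1}$ near the origin), the two decompositions must agree term-by-term, forcing $h_\varepsilon = g_\varepsilon$ and $h_\varepsilon(0) = \beta_\alpha(\omega)\,C(f_\varepsilon)$. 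Theorem~\ref{t.2.1} yields the convergences $g_\varepsilon \to g$ in $H^{1,p}$ and $C(f_\varepsilon) \to C(f)$; the continuous embedding into $C^0$ transfers the trace at the origin to the limit, producing $g(0) = \beta_\alpha(\omega)\,C(f)$.

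The step I expect to be the main obstacle is verifying rigorously that the smoothed data $f_\varepsilon$ have enough regularity to activate the $H^{2,p}_\alpha$ decomposition, namely that $f_\varepsilon \in H^{1,p}_\alpha$ and therefore $(\omega-\Delta_\alpha)^{-1/2}f_\varepsilon \in H^{2,p}_\alpha$, for which a careful scaling argument combined with the density of nice subclasses is required. Uniqueness of $g$ is immediate: any two admissible $g, \tilde g \in H^{1,p}$ would force $g - \tilde g$ to be a scalar multiple of $\mathbb{G}_\omega$ still lying in $H^{1,p}$, and since $\mathbb{G}_\omega \notin H^{1,p}$ for $p \geq 2$ the scalar must vanish.
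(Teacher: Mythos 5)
Your proposal is correct and follows essentially the same route as the paper: absorb the $H^{1,p}$ remainder $R$ into the regular part to pass from the $\varphi_0$-decomposition of Theorem \ref{t.2.1} to a $\mathbb{G}_\omega$-decomposition, use the failure of $\mathbb{G}_\omega$ to lie in $H^{1,p}$ for $p>2$ to get uniqueness, identify the coefficient with $g(0)/\beta_\alpha(\omega)$ on a dense class of smoothed data via the $H^{2}_\alpha$-type decomposition, and pass to the limit using the $L^p$-boundedness of $\Lambda$ and $C$ together with the embedding $H^{1,p}\hookrightarrow C^0$. The obstacle you flag (regularity of the smoothed data) is exactly the point the paper also treats only briefly.
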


Next we turn to an application of the above results.
We consider the following NLS associated with $\Delta_\alpha$
\begin{equation}\label{eq.CP81}
\begin{aligned}
    &( i \partial_t + \Delta_\alpha) u =\mu u|u|^{p-1}, \ p >1, \mu = \pm 1.\\
   & u(0) = u_0 \in H^1_\alpha(\mathbb{R}^2).
    \end{aligned}
\end{equation}
Formally, the conservation of mass and energy are associated with the relations
\begin{equation}
     \|u(t)\|^2_{L^{2}(\mathbb{R}^2)}  =  \|u(0)\|^2_{L^{2}(\mathbb{R}^2)}  , \ \ E(t) = E(0),
\end{equation}
where
\begin{equation}
\begin{aligned}
    & E(t) =   \frac{1}{2} \langle -\Delta_\alpha u(t), u(t) \rangle_{L^2} + \frac{\mu}{p+1} \|u(t)\|^{p+1}_{L^{p+1}(\mathbb{R}^2)} = \\
    & =\frac{1}{2} \left\| (\omega-\Delta_\alpha)^{1/2} u(t)\right\|^2_{L^2} -  \frac{\omega}{2} \|u(t)\|^2_{L^{2}(\mathbb{R}^2)}   + \frac{\mu}{p+1} \|u(t)\|^{p+1}_{L^{p+1}(\mathbb{R}^2)}.
\end{aligned}
\end{equation}
First we consider the mass subcritical case $p \in (1,3)$ and we state the following local existence result in $L^2.$
\begin{Theorem}\label{t.le19}
    For any $p \in (1,3)$ and any $R>0$ there  exists $T=T(R,p)>0$ so that for any
    $$ u_0 \in B_{L^2}(R) = \left\{ \phi \in L^2;  \|\phi\|_{L^2} \leq R \right\}$$
    there exists a unique solution
    $$ u \in C([0,T]; L^2)$$
    to the integral equation
\begin{equation} \label{eq.ie30}
    u = e^{it \Delta_\alpha} u_0 -i \int_0^t e^{i(t-\tau)\Delta_\alpha}  u(\tau)|u(\tau)|^{p-1} d\tau.
\end{equation}
    associated to \eqref{eq.CP81}.
\end{Theorem}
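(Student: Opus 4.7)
The plan is to run a standard Cazenave-type contraction argument in a Strichartz space. As recalled in the overview, the boundedness of the wave operators of \cite{CMY19, CMY19b} in every $L^r$, $2 \le r <\infty$, together with the explicit structure of the absolutely continuous subspace for $N=1$, yields local-in-time Strichartz estimates for $e^{it\Delta_\alpha}$ without the spectral projection: for every 2d admissible pair $(q,r)$, i.e.\ $\frac{1}{q}+\frac{1}{r}=\frac{1}{2}$ with $2<q\le\infty$,
\begin{equation*}
\|e^{it\Delta_\alpha}f\|_{L^q([0,T];L^r)}\lesssim\|f\|_{L^2},\qquad \Big\|\int_0^t e^{i(t-\tau)\Delta_\alpha}F(\tau)\,d\tau\Big\|_{L^q([0,T];L^r)}\lesssim\|F\|_{L^{q'}([0,T];L^{r'})}.
\end{equation*}

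The key is to tune the admissible pair to the nonlinearity $u|u|^{p-1}$. I would choose $r=p+1$, which forces $q=\tfrac{2(p+1)}{p-1}$; one checks that $q>p+1$ (equivalently $p<3$) is precisely mass subcriticality. With this choice $\bigl\||u|^{p-1}u\bigr\|_{L^{r'}}=\|u\|_{L^r}^p$, and H\"older in time (using $pq'<q$) yields the time gain
\begin{equation*}
\bigl\||u|^{p-1}u\bigr\|_{L^{q'}([0,T];L^{r'})}\lesssim T^{\theta}\,\|u\|_{L^q([0,T];L^r)}^p,\qquad \theta=\frac{q-1-p}{q}>0.
\end{equation*}

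I would then set
\begin{equation*}
X_T = C([0,T];L^2)\cap L^q([0,T];L^{p+1}),\qquad \|u\|_{X_T}=\|u\|_{L^\infty_T L^2_x}+\|u\|_{L^q_T L^{p+1}_x},
\end{equation*}
and consider the Duhamel map $\Phi(u)$ defined by the right-hand side of \eqref{eq.ie30}. Applying the Strichartz estimates above both with the admissible pair $(q,p+1)$ and with $(\infty,2)$, together with the pointwise bound $\bigl||u|^{p-1}u-|v|^{p-1}v\bigr|\lesssim(|u|^{p-1}+|v|^{p-1})|u-v|$, gives
\begin{equation*}
\|\Phi(u)\|_{X_T}\le C_1\|u_0\|_{L^2}+C_2 T^{\theta}\|u\|_{X_T}^p,\qquad \|\Phi(u)-\Phi(v)\|_{X_T}\le C_2 T^{\theta}\bigl(\|u\|_{X_T}^{p-1}+\|v\|_{X_T}^{p-1}\bigr)\|u-v\|_{X_T}.
\end{equation*}
Fixing $\rho=2C_1 R$ and choosing $T=T(R,p)$ so that $C_2 T^{\theta}\rho^{p-1}\le\tfrac12$ makes $\Phi$ a contraction of the closed ball of radius $\rho$ in $X_T$ into itself, and the Banach fixed point theorem produces the unique fixed point.

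The main obstacle is essentially bookkeeping rather than conceptual: one needs to know that the $L^r$ theory for $e^{it\Delta_\alpha}$ on $\mathbb{R}^2$ is sufficiently developed to use the inhomogeneous Strichartz estimate above without the absolutely-continuous projection, which is precisely what is granted by \cite{CMY19, CMY19b}. Upgrading uniqueness from $X_T$ to the full class $C([0,T];L^2)$ claimed in the statement is mildly delicate, but follows from the standard observation that any $C([0,T];L^2)$ solution of \eqref{eq.ie30} must, by Strichartz applied directly to the Duhamel formula, automatically lie in $L^q_{\mathrm{loc}}L^{p+1}$, hence coincide with the fixed point on a possibly smaller interval and, by iterating, on all of $[0,T]$.
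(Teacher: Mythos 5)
Your proposal is correct and follows essentially the same route as the paper: a contraction argument in a local-in-time Strichartz space for $e^{it\Delta_\alpha}$, with H\"older in time supplying the factor $T^{\theta}$, $\theta>0$ precisely when $p<3$. The only differences are bookkeeping — the paper estimates the Duhamel term in $L^1_TL^2_x$ and contracts in $L^{2p/(p-1)}_TL^{2p}_x$, whereas you use the dual admissible pair and contract in $C_TL^2\cap L^q_TL^{p+1}$ — and your closing remark upgrading uniqueness to the full class $C([0,T];L^2)$ is in fact more careful than what the paper records.
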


\begin{Remark}
    Using a rescaling argument (see Section \ref{sec.as8}) in the mass subcritical case and assuming initial data in $H^1_\alpha$
    one can prove the conservation of energy and global existence result.
\end{Remark}
\begin{Remark}
    In the mass critical $(p=3)$ and mass super critical $(p>3)$ cases we can obtain local existence result in $H^1_\alpha.$ See Theorem below
\end{Remark}

\section{Characterization of $H^{1,p}_\alpha$}

We start with the well-known fact that $H^{1,p}(\mathbb{R}^2)$ has as a  norm (see section 1.3.1 and the identity i) in section 1.4.1 in \cite{YSY10})
\begin{equation}\label{eq.s1p3}
   \|u\|_{H^{1,p}} =  \|u\|_{W^{1,p}} = \sum_{|\alpha|\leq 1}\|\partial_x^\alpha u\|_{L^p} , \ \ 1 < p < \infty.
\end{equation}
Another equivalent norm is the following one
\begin{equation}
   \|u\|_{H^{1,p}} =  \|(1-\Delta)^{1/2}u\|_{L^p} ,  \ \ 1 < p < \infty.
\end{equation}

\begin{Remark}
    Using the norm \eqref{eq.s1p3},   we obtain  the following property: the functions $\varphi_0$ defined in \eqref{eq.vp08}
    as well as the function $\mathbb{G}_\omega$ defined in \eqref{eq.decomposition}
    are in $H^{1,p}(\mathbb{R}^2)$ if and only if $p<2.$
\end{Remark}

\begin{proof}[Proof of Theorem \ref{t.2.1}]
Using the rescaling argument of Section \ref{sec.res2}, we can assume $\omega =1$ and $\alpha$ is so large that
the unique eigenvalue $4 e^{-4\pi\alpha - 2\gamma}$ determined in \eqref{eq.pi1} is in the interval $(0,1).$

    We start from the case $p>2$. Thanks to the decomposition of $\mathbb{G}_{1+t}$ in \eqref{eq.decomposition}, we can write
    \begin{equation} \label{eq.cpg160}
        (1-\Delta_\alpha)^{-1/2}f=g+   \frac{1}{\pi}\varphi_0(r)\int_0^\infty t^{-1/2}  \langle \mathbb{G}_{1+t}, f \rangle \frac{dt}{\beta_\alpha(1+t)},
    \end{equation}
with
\begin{equation}\label{eq.def g}
   g=(1 - \Delta)^{-1/2}f+\Gamma(f)+\Gamma_0(f),
\end{equation}
where
\begin{equation}\label{eq.Ga16}
\begin{aligned}
     & \Gamma (f) (x) = \int_0^\infty t^{-1/2} R( \sqrt{t+1} |x|)  \left(\int_{\mathbb{R}^2}   \mathbb{G}_{1+t}( |y|) \overline{f(y)}  dy\right) dt
\end{aligned}
\end{equation}
and
\begin{equation}\label{eq.Ga022}
\begin{aligned}
     & \Gamma_0 (f) (x) = \int_0^\infty t^{-1/2} [\varphi_0( \sqrt{t+1} |x|) - \varphi_0(|x|)] \left(\int_{\mathbb{R}^2}   \mathbb{G}_{1+t}( |y|) \overline{f(y)}  dy\right) dt.
\end{aligned}
\end{equation}
Thanks to Lemmas \ref{l.gamma} and \ref{l.gamma0}, in the  Section \ref{s.lp} we will prove that $\Gamma(f),\Gamma_1(f)\in H^{1,p}$.
We note in particular that $\varphi_0(r)\notin H^{1,p}$, for $p>2$ because $|\partial_x^\alpha \varphi_0(r)|\sim 1/r$ near zero.

If $1<p<2$, we have instead
$$(1-\Delta_\alpha)^{-1/2}f=(1 - \Delta)^{-1/2}f+\Gamma(f)+\Gamma_1(f),$$
with
\begin{equation}
\begin{aligned}
     & \Gamma_1 (f) (x) = \int_0^\infty t^{-1/2} \varphi_0( \sqrt{t+1} |x|) \left(\int_{\mathbb{R}^2}   \mathbb{G}_{1+t}( |y|) \overline{f(y)}  dy\right) dt.
\end{aligned}
\end{equation}
 Again in  Section \ref{s.lp}, Lemmas \ref{l.gamma} and \ref{l.gamma1}  will give that $\Gamma(f),\Gamma_1(f)\in H^{1,p}$ and moreover \eqref{eq.lll86} and \eqref{eq.gamma1.2}, with an elliptic estimate give
 \begin{equation}
     \|(1-\Delta_\alpha)^{-1/2}f\|_{H^{1,p}}\lesssim\|f\|_{L^p},
 \end{equation}
that is equivalent to \eqref{eq.stimaellittica}.
\end{proof}
\begin{proof}[Proof of Theorem \ref{t.2.2}]
    We start with  the uniqueness. It follows by contradiction argument. So let us assume that there exist $g_1$ , $g_2\in H^{1,p}$ such that
    $$(\omega - \Delta_\alpha)^{-1/2}f =g_1+\frac{ g_1(0)}{\beta_\alpha (\omega)}\,\mathbb{G}_\omega$$
    and
    $$(\omega - \Delta_\alpha)^{-1/2}f =g_2+\frac{ g_2(0)}{\beta_\alpha (\omega)}\,\mathbb{G}_\omega.$$
    From these two equations we obtain that
    $$g_1-g_2=\frac{ g_2(0)-g_1(0)}{\beta_\alpha (\omega)}\,\mathbb{G}_\omega.$$
    We remember that $\mathbb{G}_\omega\notin H^{1,p}$ for $p>2$, because its first derivative behaves like $1/r$ for $r$ near zero, so $ g_2(0)-g_1(0)=0$ and $g_1=g_2$.

    Our next step is to prove that for any $\phi =  (1-\Delta_\alpha)^{-1/2}f  \in H^{1,p}_\alpha$ we have
    \begin{equation}\label{eq.dec522}
        \phi=\Lambda(f)+C(f)\mathbb{G}_1,
    \end{equation}
    where
    $$ \Lambda: L^p \to H^{1,p}$$
    is a linear bounded operator and $C(f)$ is a bounded functional on $L^p.$

    As before we can assume $\omega=1$.
We have the relations \eqref{eq.def g} for $p >2.$ So we can write
 \begin{equation} \label{eq.cpg1515}
        (1-\Delta_\alpha)^{-1/2}f= \tilde{g} +   C(f)\varphi_0(r),
    \end{equation}
where
$$ \tilde{g}=(1 - \Delta)^{-1/2}f+\Gamma(f)+\Gamma_0(f). $$ Here $\Gamma$ and $\Gamma_0$  are $L^p-H^{1,p}$ continuous and $C(f)$ is a $L^p$ bounded functional. Recall that $G_1(r) = \varphi_0(r)+R(r) $ due to \eqref{eq.decomposition} and $R(|x|)$ is in $H^{1,p}.$ Hence defining
    $$g= \tilde{g} - C(f)R = (1 - \Delta)^{-1/2}f+\Gamma(f)+\Gamma_0(f)-C(f)R = \Lambda(f),$$
    we arrive at \eqref{eq.dec522}.

    Now we can  prove \eqref{eq.rappresentazione} by using \eqref{eq.dec522} and density argument.
   Let $\phi=(1-\Delta_\alpha)^{-1/2}f$  with $f \in L^q$ Then we can approximate $f$ by $f_n \in H^{1,p}_\alpha$ so that
    $f_n\to f $ in $L^p$. On one hand,
    $$ \phi_n = (1-\Delta_\alpha)^{-1/2}f_n \in H^2_\alpha $$
    so
    \begin{equation}
        \phi_n = g_n + g_n(0) \frac{\mathbb{G}_1}{\beta_1(\alpha)}
    \end{equation}
On the other hand , from \eqref{eq.dec522} we can deduce
\begin{equation}\label{eq.dec548}
        \phi_n=\Lambda(f_n)+C(f_n)\mathbb{G}_1,
    \end{equation}
    so the uniqueness observation discussed above implies
    $$ g_n = \Lambda(f_n), \ C(f_n) =   \frac{g_n(0)}{\beta_1(\alpha)} .$$
    and after taking the limit we get
    \begin{equation}
        \phi = g + g(0) \frac{\mathbb{G}_1}{\beta_1(\alpha)}.
    \end{equation}
This completes the proof.
\end{proof}

\section{$L^p$ estimates of the operators $\Gamma$} \label{s.lp}

The canonical representation \eqref{eq.Darr} shows that we have to consider (after rescaling) the term
\begin{equation}\label{eq.mop2}
    \frac{1}{\pi}\int_0^\infty t^{-1/2} \mathbb{G}_{1+t}(|x|) \langle \mathbb{G}_{1+t}, f \rangle \frac{dt}{\beta_\alpha(1+t)}.
\end{equation}

Then \eqref{eq:defGlambda} and asymptotics of Appendix \ref{sec.as8} imply that

\begin{equation}\label{eq:fgl10a}
\mathbb{G}_{1+t}(x)\;= \varphi_0( \sqrt{1+t}|x|) +R(\sqrt{1+t}|x|), \ \ t>0,
\end{equation}
where  $\varphi_0(r)$ is smooth in $(0,\infty)$ and satisfies
\begin{equation}
  \left\{  \begin{aligned}
       & \mathrm{supp} \  \varphi_0 (r) \subset \{ r \leq 2 \}, \\
       & \varphi_0(r)= - (2\pi)^{-1}\left( \log \left( r/2\right) + \gamma  \right) , \ \  r \leq 1 ,\\
       & | \partial_r^k \varphi_0(r)| \lesssim  r^{-k} \log^{1-k}(2/r)\  \ \ k=0,1,
    \end{aligned}\right.
\end{equation}
while the remainder $R$ is represented by two terms localised near $0$ and $\infty$ respectively. More precisely, we have
\begin{equation}
    R(r) = R_{small}(r) + R_{large}(r)
\end{equation}
where
\begin{equation}\label{eq.pss39}
  \left\{  \begin{aligned}
       & \mathrm{supp} \  R_{small}(r) \subset \{ r \leq 2 \}, \\
     & | \partial_r^k R_{small}(r)| \lesssim r^{2-k} \log(2/r) , r \leq 1, \ k=0,1, \\
    \end{aligned}\right.
\end{equation}
and
\begin{equation} \label{eq.pss47}
  \left\{  \begin{aligned}
       & \mathrm{supp} \  R_{large} (r) \subset \{ r \geq 1/\sqrt 2 \}, \\
       & R_{large}(r)\in L^q((0,\infty), rdr), \ \ \forall q \in (1,\infty), \\
     & \exists \delta>0, \ \  \mbox{so that} \  \ |R^\prime_{large}(\sigma)| \leq e^{-\delta \sigma} , \forall \sigma  \geq 1/\sqrt 2.
    \end{aligned}\right.
\end{equation}

We have the following

\begin{Lemma}
    If $\mathrm{arg} \lambda \in (\varepsilon, \pi)$ and $|\lambda| >0,$ then for any $p \in (1,\infty)$  we have
    \begin{equation}\label{eq.SSE9}
     \left\{   \begin{aligned}
       & \sum_{|\alpha|=m} \|\partial_x^\alpha R(\lambda |x|)\|_{H^{k,p}(\mathbb{R}^2_x)} \lesssim |\lambda|^{k+m-2/p}, k,m=0,1, \\
       & \|\varphi_0(\lambda |x|)\|_{L^p(\mathbb{R}^2_x)} \lesssim |\lambda|^{-2/p},\\
       &   \|G_{\lambda^2}(x)\|_{L^p(\mathbb{R}^2_x)} \lesssim |\lambda|^{-2/p}.
        \end{aligned}\right.
    \end{equation}
    Moreover for $p \in [1,2) $ we have
    \begin{equation}\label{eq.phi.2}
        \|\varphi_0(\lambda |x|)\|_{H^{1,p}(\mathbb{R}^2_x)} \lesssim |\lambda|^{1-2/p}.
    \end{equation}
\end{Lemma}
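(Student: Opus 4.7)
The plan is to reduce everything to a scaling argument on $\mathbb{R}^{2}$. For a radial function $f(|\cdot|)$ and $\lambda>0$, the change of variables $y=\lambda x$ gives
\begin{equation*}
\|f(\lambda|\cdot|)\|_{L^{p}(\mathbb{R}^{2})} \;=\; \lambda^{-2/p}\,\|f(|\cdot|)\|_{L^{p}(\mathbb{R}^{2})},
\end{equation*}
and, since $\partial_{x_{j}}[f(\lambda|x|)] = \lambda\, f'(\lambda|x|)\,x_{j}/|x|$, iteration of the chain rule yields
\begin{equation*}
\|\partial_{x}^{\alpha}[f(\lambda|\cdot|)]\|_{L^{p}(\mathbb{R}^{2})} \;\lesssim\; \lambda^{|\alpha|-2/p}\sum_{|\beta|\leq|\alpha|}\|\partial_{x}^{\beta}[f(|\cdot|)]\|_{L^{p}(\mathbb{R}^{2})}.
\end{equation*}
This already accounts for the powers $|\lambda|^{k+m-2/p}$ in \eqref{eq.SSE9} and $|\lambda|^{1-2/p}$ in \eqref{eq.phi.2}, so the lemma reduces to checking that the base norms at $\lambda=1$ are finite, which is purely a matter of the pointwise bounds on $R$ and $\varphi_{0}$.

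Next I would verify the integrability of each model function. For $R=R_{small}+R_{large}$: the bounds $|R_{small}(r)|\lesssim r^{2}\log(2/r)$ and $|R_{small}'(r)|\lesssim r\log(2/r)$ in \eqref{eq.pss39} are supported in $\{r\leq 2\}$, hence $R_{small}$ and its first derivative are in $L^{p}((0,\infty),r\,dr)$ for every $p\in[1,\infty)$; by \eqref{eq.pss47}, $R_{large}\in L^{q}((0,\infty),r\,dr)$ for all $q\in(1,\infty)$ and $R_{large}'$ decays exponentially, so the same holds for $R_{large}$. This gives \eqref{eq.SSE9} for $R$ via the scaling identities above. For $\varphi_{0}$, the pointwise bounds $|\varphi_{0}(r)|\lesssim\log(2/r)\mathbf{1}_{r\leq 2}$ and $|\varphi_{0}'(r)|\lesssim r^{-1}\mathbf{1}_{r\leq 2}$ put $\varphi_{0}$ in $L^{p}(r\,dr)$ for every $p$, while $\varphi_{0}'$ lies in $L^{p}(r\,dr)$ iff $\int_{0}^{2}r^{1-p}\,dr<\infty$, i.e.\ $p<2$, which is exactly the restriction in \eqref{eq.phi.2}. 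The $L^{p}$ bound on $\mathbb{G}_{\lambda^{2}}$ follows from the decomposition \eqref{eq.decomposition} and the triangle inequality.

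Finally, for complex $\lambda$ with $\arg\lambda\in(\varepsilon,\pi)$, I would appeal to the holomorphic extension of $K_{0}$ off the negative real axis to extend $\mathbb{G}_\omega$, $\varphi_{0}$ and $R$ to the sector $\arg z\in(\varepsilon,\pi)$. The change of variable $y=\lambda x$ is still valid and contributes Jacobian $|\lambda|^{2}$, so the same computation delivers the required factor $|\lambda|^{-2/p}$ (respectively $|\lambda|^{|\alpha|-2/p}$ for derivatives) provided the pointwise estimates on $R_{small}$, $R_{large}$ and $\varphi_{0}$ continue to hold \emph{uniformly} in the sector. This uniform control of the holomorphic extensions is the one delicate point I foresee; it follows from the Hankel-function asymptotics collected in the appendix, but it is the step that would need to be written out carefully, while the scaling part of the argument is completely routine.
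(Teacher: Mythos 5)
The paper itself gives no proof of this lemma: it is stated and immediately followed by the definition of $\Gamma$, with the needed ingredients scattered between \eqref{eq.pss39}, \eqref{eq.pss47} and Appendix \ref{sec.as8}. So there is no argument of the authors to compare against; your scaling-plus-pointwise-bounds reduction is exactly the intended (and essentially only) route, and for real $\lambda\geq 1$ it does prove the second, third and fourth estimates as well as the $k+m\leq 1$ cases of the first line.

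Two points still need attention. First, the case $k=m=1$ of \eqref{eq.SSE9} involves \emph{second} derivatives of $R$, and neither your verification nor the paper's listed properties \eqref{eq.pss39}--\eqref{eq.pss47} cover these: you check that $R$ and $R'$ lie in $L^p(r\,dr)$, but for $\|\partial_x^\alpha R(\lambda|x|)\|_{H^{1,p}}$ with $|\alpha|=1$ you must also bound $|R''(r)|+|R'(r)|/r$. This does work out --- near the origin the $r^{-2}$ singularities of $K_0''$ and of $(\log(r/2))''$ cancel in $R=\mathbb{G}_1-\varphi_0$, leaving $R''_{small}(r)=O(\log(2/r))$, and $R''_{large}$ decays exponentially by the Bessel asymptotics --- but it is a separate computation that should be written down, not a consequence of what you cite. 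Relatedly, with the inhomogeneous $H^{k,p}$ norm the lower-order term scales like $|\lambda|^{m-2/p}$, so the stated bound $|\lambda|^{k+m-2/p}$ only follows for $|\lambda|\gtrsim 1$ (which is all that is used, since $\lambda=\sqrt{1+t}$ in the applications, but the lemma asserts it for all $|\lambda|>0$). Second, you correctly identify the uniformity of the pointwise bounds on the holomorphic extensions over the sector $\arg\lambda\in(\varepsilon,\pi)$ as the delicate step, but you defer it rather than prove it; since the cutoff $\phi$ in the definition of $\varphi_0$ is not holomorphic, one must also say precisely what $\varphi_0(\lambda|x|)$ and $R(\lambda|x|)$ mean for non-real $\lambda$ (e.g.\ by cutting off in $|\lambda||x|$ and absorbing the discrepancy into $R$). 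Filling in these two items would make the proof complete.
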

 The term \eqref{eq.mop2} suggests to  consider the operator
\begin{equation}
\begin{aligned}
     & \Gamma (f) (x) = \int_0^\infty t^{-1/2} R( \sqrt{t+1} |x|)  \left(\int_{\mathbb{R}^2}   \mathbb{G}_{1+t}( |y|) \overline{f(y)}  dy\right) dt.
\end{aligned}
\end{equation}
Note that for simplicity we do not put the factor $\beta_1(\alpha)$ in denominator, since this factor is bounded from below.

\begin{Lemma}\label{l.gamma}
    For any $p \in (1,\infty)$ the operator $\Gamma$ maps $L^{p}(\mathbb{R}^2)$ into $H^{1, p}(\mathbb{R}^2).$
\end{Lemma}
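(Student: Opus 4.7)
My plan is to prove $\|\Gamma(f)\|_{H^{1,p}(\mathbb{R}^2)}\lesssim \|f\|_{L^p(\mathbb{R}^2)}$ for any $p\in(1,\infty)$. By the equivalent norm \eqref{eq.s1p3}, this reduces to estimating $\|\Gamma(f)\|_{L^p}$ and $\|\nabla \Gamma(f)\|_{L^p}$ separately in terms of $\|f\|_{L^p}$. In both cases the scheme is the same: apply Minkowski's integral inequality to bring the spatial norm inside the $t$-integral, estimate the $R$-factor via the scaling bounds in \eqref{eq.SSE9}, and control the scalar pairing $\langle \mathbb{G}_{1+t},f\rangle$ by H\"older with $\|\mathbb{G}_{1+t}\|_{L^{p'}}\lesssim (1+t)^{-1/p'}$ (third line of \eqref{eq.SSE9}).

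For $\|\Gamma(f)\|_{L^p}$ this runs cleanly: combining $\|R(\sqrt{1+t}|\cdot|)\|_{L^p}\lesssim (1+t)^{-1/p}$ with $|\langle\mathbb{G}_{1+t},f\rangle|\lesssim (1+t)^{-1/p'}\|f\|_{L^p}$ produces
$$\|\Gamma(f)\|_{L^p}\lesssim \|f\|_{L^p}\int_0^\infty \frac{dt}{t^{1/2}(1+t)},$$
a convergent integral. For the gradient, the same scheme with $\|\nabla R(\sqrt{1+t}|\cdot|)\|_{L^p}\lesssim (1+t)^{1/2-1/p}$ produces the integrand $t^{-1/2}(1+t)^{-1/2}$, which is only logarithmically divergent at infinity. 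To fix this I will split the $t$-interval at $t=1$ and use the decomposition $R=R_{small}+R_{large}$ from \eqref{eq.pss39}--\eqref{eq.pss47} on the tail. For $R_{large}$, the exponential decay $|R_{large}'(\sigma)|\lesssim e^{-\delta\sigma}$ pairs with the exponential concentration of $\mathbb{G}_{1+t}$ on $|y|\lesssim 1/\sqrt{1+t}$, and I plan to apply Schur's test to the two-variable kernel
$$\tilde K(x,y)=\int_1^\infty t^{-1/2}\sqrt{1+t}\,R_{large}'(\sqrt{1+t}|x|)\,\mathbb{G}_{1+t}(|y|)\,dt.$$
For $R_{small}$, the support restriction $|x|\leq 2/\sqrt{1+t}$ together with the second-order vanishing $|R_{small}'(\sigma)|\lesssim \sigma\log(2/\sigma)$ near the origin provides the extra smallness needed to compensate the chain-rule factor $\sqrt{1+t}$.

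The main obstacle is precisely this gradient estimate at large $t$: each rank-one slice $f\mapsto \sqrt{1+t}\,R'(\sqrt{1+t}|x|)\langle\mathbb{G}_{1+t},f\rangle$ has $L^p\to L^p$ operator norm only of order $(1+t)^{-1/2}$, so after the factor $t^{-1/2}$ the integral of norms just fails to converge. Closing the estimate requires exploiting that $R(\sqrt{1+t}|\cdot|)$ and $\mathbb{G}_{1+t}$ concentrate at the common dyadic scale $1/\sqrt{1+t}$, providing effective localization beyond what Minkowski captures. Making this precise through a kernel-level Schur test with appropriately chosen weights (or, equivalently, a Littlewood--Paley type square-function argument) is the technically delicate step.
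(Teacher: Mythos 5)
Your diagnosis of the obstacle is exactly right: the Minkowski-plus-H\"older scheme gives the integrand $t^{-1/2}(1+t)^{-1/2}$ for the gradient term, which diverges logarithmically, so the estimate cannot be closed by summing $L^p\to L^p$ operator norms of the rank-one slices. But the proposal stops where the real work begins: the Schur test (or square-function argument) that you yourself designate as ``the technically delicate step'' is never carried out, and the ingredients you list do not obviously assemble into it. In particular, both of your localization mechanisms --- the support restriction $t\lesssim |x|^{-2}$ for $R_{small}$ and the matching of the scales of $R_{large}'$ and $\mathbb{G}_{1+t}$ --- only improve the \emph{pointwise} bound, to $\sum_{|\alpha|=1}|\partial_x^\alpha\Gamma(f)(x)|\lesssim |x|^{-2/q}\|f\|_{L^q}$. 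Since $|x|^{-2/q}\notin L^q(\mathbb{R}^2)$, no estimate of the factored form $|\nabla\Gamma(f)(x)|\le h(x)\,\|f\|_{L^q}$ can yield the strong bound $\|\nabla\Gamma(f)\|_{L^q}\lesssim\|f\|_{L^q}$ at a fixed $q$; so the plan as written would at best reprove this borderline pointwise estimate and then stall.

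The paper's resolution is precisely to embrace the borderline: it establishes the pointwise bound $|x|^{-2/q}\|f\|_{L^q}$ (splitting $R=R_{small}+R_{large}$ much as you propose, using the support condition for $R_{small}$ and the change of variables $\sigma=\sqrt{1+t}\,|x|$ together with $\int_0^\infty \sigma^{-1+2/q}|R_{large}'(\sigma)|\,d\sigma\lesssim 1$ for $R_{large}$), observes that $|x|^{-2/q}$ lies in weak $L^q$, hence obtains $\sum_{|\alpha|=1}\|\partial_x^\alpha\Gamma(f)\|_{L^{q,\infty}}\lesssim\|f\|_{L^q}$ for every $q\in(1,\infty)$, and then applies the Marcinkiewicz interpolation theorem across different values of $q$ to upgrade weak type to strong type on the open range $p\in(1,\infty)$. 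This weak-type-plus-interpolation device is the missing idea in your write-up; without it, or a genuinely completed Schur or square-function argument that you have not supplied, the proof of the $H^{1,p}$ bound is incomplete.
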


\begin{proof}
   It is sufficient to prove
\begin{equation}\label{eq.lll82}
    \|\Gamma(f)\|_{L^{q}} \lesssim \|f\|_{L^q}
\end{equation}
and
\begin{equation} \label{eq.lll86}
    \sum_{|\alpha|=1}\|\partial_x^\alpha \Gamma(f)\|_{L^{q,\infty}} \lesssim \|f\|_{L^q}
\end{equation}
and then apply Marcinkiewicz  interpolation theorem.

   Using \eqref{eq.SSE9} we have
   \begin{equation} \label{eq.11192}
   \begin{aligned}
     & |\Gamma(f)(x)|  \lesssim  \int_0^\infty t^{-1/2} R( \sqrt{t+1} |x|)  \left\|  \mathbb{G}_{1+t}(  |y|) \right\|_{L^{q^\prime}_y} dt \|f\|_{L^q} \lesssim \\
      & \int_0^\infty t^{-1/2} R( \sqrt{t+1} |x|) (1+t)^{-1/q^\prime}   dt  \|f\|_{L^q}
   \end{aligned}
   \end{equation}
   so
    \begin{equation}
    \begin{aligned}
        &\|\Gamma(f)\|_{L^{q}} \lesssim   \int_0^\infty t^{-1/2} \| R( \sqrt{t+1} |x|)\|_{L^q} (1+t)^{-1/q^\prime}   dt  \|f\|_{L^q} \\
        & \lesssim \left( \int_0^\infty t^{-1/2} (1+t)^{-1} dt \right) \ \|f\|_{L^q}
    \end{aligned}
    \end{equation}
    and we have \eqref{eq.lll82}.

    The proof of \eqref{eq.lll86} is more delicate.
We have the decomposition
$$\Gamma (f) (x) = \Gamma_{small} (f) (x) + \Gamma_{large} (f) (x), $$
where
$$\Gamma_{small} (f) (x)= \int_0^\infty t^{-1/2} R_{small}( \sqrt{t+1} |x|)  \left(\int_{\mathbb{R}^2}   \mathbb{G}_{1+t}(  |y|) \overline{f(y)}  dy\right) dt,$$
$$\Gamma_{large} (f) (x)= \int_0^\infty t^{-1/2} R_{large}( \sqrt{t+1} |x|)  \left(\int_{\mathbb{R}^2}  \mathbb{G}_{1+t}(  |y|) \overline{f(y)}  dy\right) dt $$

    As in \eqref{eq.11192} we have
  \begin{equation} \label{eq.111317}
   \left\{\begin{aligned}
     & \sum_{|\alpha|=1} |\partial_x^\alpha\Gamma_{small}(f)(x)|  \lesssim
       \int_0^\infty t^{-1/2} \sum_{|\alpha|=1} \left|\partial_x^\alpha R_{small}( \sqrt{t+1} |x|) \right| (1+t)^{-1/q^\prime}   dt  \|f\|_{L^q}, \\
        & \sum_{|\alpha|=1} |\partial_x^\alpha\Gamma_{large}(f)(x)|  \lesssim
       \int_0^\infty t^{-1/2} \sum_{|\alpha|=1} \left|\partial_x^\alpha R_{large}( \sqrt{t+1} |x|) \right| (1+t)^{-1/q^\prime}   dt  \|f\|_{L^q}.
   \end{aligned}\right.
   \end{equation}

    First we shall estimate $$ \sum_{|\alpha|=1} |\partial_x^\alpha\Gamma_{small}(f)(x)|.$$

    Using the support assumption in \eqref{eq.pss39} and the estimate $\left|\partial_r R_{small}( r )\right| \lesssim 1$ stated in \eqref{eq.pss39}, we obtain
    \begin{equation}
    \begin{aligned}
     &   \int_0^\infty t^{-1/2} \sum_{|\alpha|=1} \left| \partial_x^\alpha \left( R_{small}( \sqrt{t+1} |x|) \right)\right| \ (1+t)^{-1/q^\prime}   dt \\
     &  \lesssim  \int_0^{4/|x|^2} t^{-1/2} (1+t)^{1/2}  (1+t)^{-1/q^\prime}   dt .
    \end{aligned}
    \end{equation}
    Now we have two possibilities: $|x|$ bounded, say $|x| \leq 2$ and then the other case is $|x| >2.$
    If $|x|\leq 2,$ then
    \begin{equation}
        \begin{aligned}
        &  \int_0^{4/|x|^2} t^{-1/2} (1+t)^{1/2}  (1+t)^{-1/q^\prime}   dt  \lesssim \\
     & \lesssim  \int_0^{1/4} t^{-1/2}  dt+ \int_{1/4}^{4/|x|^2} t^{-1/2} (1+t)^{-1/2+1/q}    dt\\
     &\lesssim
     1 +   \int_{1/4}^{4/|x|^2} t^{-1+1/q}     dt \lesssim |x|^{-2/q}
        \end{aligned}
    \end{equation}
    If $|x|>2,$ then $ r= \sqrt{1+t}|x|$ is outside the support of $R_{small}(r) $ so  we can conclude
\begin{equation}
    \begin{aligned}
     &   \int_0^\infty t^{-1/2} \sum_{|\alpha|=1} \left| \partial_x^\alpha \left( R_{small}( \sqrt{t+1} |x|) \right)\right| \ (1+t)^{-1/q^\prime}   dt   \lesssim  |x|^{-2/q}
    \end{aligned}
    \end{equation}
and we arrive at
    \begin{equation}\label{eq.sme52}
        \begin{aligned}
       \sum_{|\alpha|=1} |\partial_x^\alpha\Gamma_{small}(f)(x)|  \lesssim
       |x|^{-2/q}  \|f\|_{L^q}
        \end{aligned}
    \end{equation}
Next we shall estimate $$ \sum_{|\alpha|=1} |\partial_x^\alpha\Gamma_{large}(f)(x)|.$$

For the purpose we set
 $$ Q(r) =  \sum_{|\alpha|=1} \left|\partial_x^\alpha R_{large}( |x|)\right|, r=|x|, $$

Then \eqref{eq.111317} shows that we need to evaluate
$$  \left|\int_0^\infty t^{-1/2} (1+t)^{1/2} Q( \sqrt{t+1} |x|)  \ (1+t)^{-1/q^\prime}   dt \right|.$$
 In the case $|x|\leq 1/\sqrt{2}$ we use the support assumption of $Q(r)$ and  we see that integration domain for $t$ is determined by
$$t > \frac{1}{4r^2}-1 \geq \frac{1}{8r^2}\geq 1 .$$
Hence
    \begin{equation}
    \begin{aligned}
     &   \int_0^\infty t^{-1/2} (1+t)^{1/2} Q( \sqrt{t+1} |x|)  \ (1+t)^{-1/q^\prime}   dt \\
     &  \lesssim  \int_{1/8r^2}^\infty t^{-1/2} (1+t)^{1/2} Q( \sqrt{t+1} |x|)  \ (1+t)^{-1/q^\prime}   dt .
    \end{aligned}
    \end{equation}
    We have also
    $$ \int_{1/8r^2}^\infty t^{-1/2} (1+t)^{1/2} Q( \sqrt{t+1} |x|)  \ (1+t)^{-1/q^\prime}   dt \lesssim $$
$$ \int_{1/8r^2}^\infty (1+t)^{-1+1/q} Q( \sqrt{t+1} |x|)   dt$$
Now we make change of variables $\sigma=\sqrt{t+1} |x| $ and from $$ t >  \frac{1}{8r^2} $$ we find
$$ \sigma^2 = |x|^2 + t|x|^2 \geq |x|^2 + \frac{1}8 \geq \frac{1}8 $$ so we find

$$ \int_{1/8r^2}^\infty (1+t)^{-1+1/q} Q( \sqrt{t+1} |x|)   dt \lesssim $$ $$\int_{1/2\sqrt{2}}^\infty \sigma^{-2+2/q} |x|^{2-2/q} Q( \sigma)  \   \frac{2\sigma d\sigma}{|x|^2} . $$
Applying the assumption \eqref{eq.pss47}, we see that
$$  \int_0^\infty \sigma^{-1+2/q}| \partial_\sigma R_{large}(\sigma)| d\sigma \lesssim 1$$
and we find
$$\int_{1/8r^2}^\infty (1+t)^{-1+1/q} Q( \sqrt{t+1} |x|)   dt \lesssim |x|^{-2/q}. $$

Hence we have
\begin{equation} \label{eq.lae87}
    \sum_{|\alpha|=1} |\partial_x^\alpha\Gamma_{large}(f)(x)|\lesssim |x|^{-2/q} \|f\|_{L^q}, \ \ |x| \leq 1/\sqrt{2}.
\end{equation}

We turn to the case $|x| > 1/\sqrt 2.$

Then the assumption
$$ \exists \delta>0, \ \  \mbox{so that} \  \ Q(\sigma)= |R^\prime_{large}(\sigma)| \leq e^{-\delta \sigma} , \forall \sigma  \geq 1/\sqrt2$$
from \eqref{eq.pss47} guarantees that
$$ Q( \sqrt{t+1} |x|) \leq e^{-\delta |x| \sqrt{1+t}} \lesssim e^{-\delta \sqrt{1+t}/4} e^{-\delta |x|/2}, \ \ \forall |x| >1/\sqrt2, t \geq 0.$$

Therefore, we have
$$ \sum_{|\alpha|=1} |\partial_x^\alpha\Gamma_{large}(f)(x)|  \lesssim
       \int_0^\infty t^{-1/2}  (1+t)^{-1/2+ q} \sum_{|\alpha|=1} Q( \sqrt{t+1} |x|)    dt  \|f\|_{L^q} \lesssim e^{-\delta |x|/2} \|f\|_{L^q},$$
so we arrive at
\begin{equation}\label{eq.lae02}
    \sum_{|\alpha|=1} |\partial_x^\alpha\Gamma_{large}(f)(x)|\lesssim |x|^{-2/q} \|f\|_{L^q}, \ \ |x| > 1/\sqrt2.
\end{equation}
From \eqref{eq.sme52}, \eqref{eq.lae87} and \eqref{eq.lae02} we conclude that \eqref{eq.lll86} is true.

\end{proof}

Next we consider the operator
\begin{equation}
\begin{aligned}
     & \Gamma_0 (f) (x) = \int_0^\infty t^{-1/2} [\varphi_0( \sqrt{t+1} |x|) - \varphi_0(|x|)] \left(\int_{\mathbb{R}^2}   \mathbb{G}_{1+t}( |y|) \overline{f(y)}  dy\right) dt.
\end{aligned}
\end{equation}

\begin{Lemma}\label{l.gamma0}
    We have the estimates
    \begin{equation}\label{eq.ll1423}
    \|\Gamma_0(f)\|_{L^{q}} \lesssim \|f\|_{L^q}
\end{equation}
and
\begin{equation} \label{eq.lll427}
    \sum_{|\alpha|=1}\|\partial_x^\alpha \Gamma_0(f)\|_{L^{q,\infty}} \lesssim \|f\|_{L^q}
\end{equation}
\end{Lemma}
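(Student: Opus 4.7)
The plan is to exploit the fact that the difference
\[
  h_t(x) := \varphi_0(\sqrt{1+t}\,|x|) - \varphi_0(|x|)
\]
is considerably more regular than $\varphi_0$ itself because the logarithmic singularities at the origin cancel. All of the work is in obtaining sharp pointwise bounds on $h_t$ and $\nabla h_t$ in terms of $t$ and $|x|$; once these are in place, the proof follows the same template as Lemma \ref{l.gamma}.

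\textbf{Step 1: Pointwise bounds on $h_t$.} Using the explicit form $\varphi_0(r) = -(2\pi)^{-1}(\log(r/2)+\gamma)\phi(r)$ together with $\phi\equiv 1$ on $[0,1]$, I would show:
\begin{itemize}
  \item (inner zone) if $\sqrt{1+t}\,|x|\le 1$ (which forces $|x|\le 1$), a direct computation gives
  \[
    h_t(x) \;=\; -(4\pi)^{-1}\log(1+t), \qquad \nabla h_t(x) \;=\; 0;
  \]
  \item (intermediate zone) if $1 < \sqrt{1+t}\,|x| < 2$, then $|h_t(x)|\lesssim 1+\log(1+t)+|\log|x||$ and $|\nabla h_t(x)| \lesssim \sqrt{1+t} + |x|^{-1}$;
  \item (outer zone) if $\sqrt{1+t}\,|x|\ge 2$, then $\varphi_0(\sqrt{1+t}\,|x|)=0$, so $h_t(x) = -\varphi_0(|x|)$ and hence $|h_t(x)|\lesssim 1+|\log|x||$, $|\nabla h_t(x)|\lesssim |x|^{-1}\mathbf{1}_{|x|\le 2}$.
\end{itemize}
In particular, both $h_t$ and $\nabla h_t$ are supported in $\{|x|\le 2\}$, uniformly in $t$. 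The crucial gain is that on the inner zone the leading $r^{-1}$ singularities of $\sqrt{1+t}\,\varphi_0'(\sqrt{1+t}\,|x|)$ and $\varphi_0'(|x|)$ cancel exactly.

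\textbf{Step 2: Inner product estimate.} By H\"older and \eqref{eq.SSE9},
\[
  \bigl|\langle \mathbb{G}_{1+t}, f\rangle\bigr|\;\le\;\|\mathbb{G}_{1+t}\|_{L^{q'}}\|f\|_{L^q}\;\lesssim\;(1+t)^{-1/q'}\|f\|_{L^q}.
\]

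\textbf{Step 3: The $L^q$ bound.} Combining the pointwise bounds of Step~1 with Step~2 and splitting the $t$-integral at the two thresholds $t=|x|^{-2}-1$ and $t=4|x|^{-2}-1$, the inner-zone contribution is
\[
   \int_0^{|x|^{-2}-1} t^{-1/2}\log(1+t)(1+t)^{-1/q'}\,dt,
\]
which is bounded uniformly in $x$ because the integrand is integrable at infinity for $q>2$ (this is where the assumption $q>2$ is used). The intermediate and outer contributions are easily seen to be $O(1+|\log|x||\,|x|^{1-2/q})$, which is bounded on $|x|\le 2$. Since $\Gamma_0(f)$ is supported in $|x|\le 2$, we conclude \eqref{eq.ll1423}.

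\textbf{Step 4: The weak-$L^q$ bound on the gradient.} Using the gradient bounds of Step~1 in the same splitting, the inner-zone piece vanishes. The intermediate zone contributes
\[
   \int_{|x|^{-2}-1}^{4|x|^{-2}-1} t^{-1/2}\sqrt{1+t}\,(1+t)^{-1/q'}\,dt \;\lesssim\; |x|^{-2/q},
\]
and the outer zone contributes
\[
   |x|^{-1}\int_{4|x|^{-2}-1}^{\infty} t^{-1/2}(1+t)^{-1/q'}\,dt \;\lesssim\; |x|^{-2/q}.
\]
Hence $|\nabla\Gamma_0(f)(x)| \lesssim |x|^{-2/q}\mathbf{1}_{|x|\le 2}\|f\|_{L^q}$, and since $|x|^{-2/q}\mathbf{1}_{|x|\le 2}\in L^{q,\infty}(\mathbb{R}^2)$, we obtain \eqref{eq.lll427}.

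\textbf{Main obstacle.} The delicate point is the bookkeeping in Step~1: one must track carefully how the cutoff $\phi$ interacts with the rescaling $r\mapsto\sqrt{1+t}\,r$ and verify that the dangerous $1/|x|$ terms cancel exactly on the inner zone (so that $\nabla h_t$ vanishes there), since only this cancellation permits a $|x|^{-2/q}$ gradient bound rather than a non-integrable $|x|^{-1}$ bound coming from $\varphi_0'$.
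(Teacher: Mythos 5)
Your proposal is correct and follows essentially the same route as the paper: the heart of both arguments is the exact cancellation of the $1/r$ singularities of $\sqrt{1+t}\,\varphi_0'(\sqrt{1+t}\,r)$ and $\varphi_0'(r)$ on the region where both arguments lie below the cutoff, which leaves only a $|x|^{-2/q}$ pointwise bound for the gradient coming from the range $t\gtrsim |x|^{-2}$, concluded via $|x|^{-2/q}\in L^{q,\infty}(\mathbb{R}^2)$ for $q>2$. The only cosmetic difference is that you organize the case analysis by the size of $\sqrt{1+t}\,|x|$ while the paper splits first in $r$ and then in $t$, and your Step 3 spells out the $L^q$ bound that the paper dispatches by reference to the proof of \eqref{eq.lll82}.
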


\begin{proof}
    The proof of \eqref{eq.ll1423} is the same as the proof of \eqref{eq.lll82}. To prove \eqref{eq.lll427} we use the inequality
    $$  \sum_{|\alpha|=1} \left|  \partial_x^\alpha [\varphi_0( \sqrt{t+1} |x|) - \varphi_0(|x|)] \right| \lesssim  \left| \sqrt{1+t}\varphi_0^\prime( |x| \sqrt{1+t}) - \varphi_0^\prime(|x|) \right|,$$
    We lose no generality assuming
$$ \varphi_0(r)= - (2\pi)^{-1}\left( \log \left( r/2\right) + \gamma  \right) \phi(r),$$
where $\phi(r)$ is smooth,  $\phi(r)=1, 0 < r \leq 1$ and $\phi(r)=0, r>2.$
Then
$$ -(2\pi)\partial_r \left(  \varphi_0(r\sqrt{1+t}) - \varphi_0(r)  \right)= \frac{1}{ r}\left(\phi(r\sqrt{1+t})-\phi(r) \right)  +
$$ $$ +\left( \log \left( r/2\right) +  \log \left( \sqrt{1+t}\right) +\gamma  \right) \sqrt{1+t} \phi^\prime(r\sqrt{1+t}) - \left( \log \left( r/2\right) + \gamma  \right) \phi^\prime(r). $$

In the case, when $ \delta < r < 2$ with $\delta>0$ small we can conclude that
$$ 1+t \leq 4/r^2$$
implies $t$ is bounded, so
$$ \partial_r \left(  \varphi_0(r\sqrt{1+t}) - \varphi_0(r)  \right) = O(1), \ \delta < r < 2, \ 1+t \leq 4/r^2.$$

Further for $q>2,$ $ \delta < r < 2$ and
$1+t \geq 4/r^2 $ we have
$$  \partial_r \left(  \varphi_0(r\sqrt{1+t}) - \varphi_0(r)  \right) = \frac{1}{2\pi r}\phi(r) + O(1) $$
Now we can follow the proof of \eqref{eq.sme52} so that
$$ \sum_{|\alpha|=1}|\partial_x^\alpha\Gamma_0 (f) (x)| \lesssim r^{-1}\int_{2/r^2}^\infty t^{-1/2} (1+t)^{-1+1/q} dt\ \|f\|_{L^q} + $$ $$ +\int_0^{2/r^2} t^{-1/2} (1+t)^{-1+1/q} dt \ \|f\|_{L^q} \lesssim \|f\|_{L^q}, \ \ \delta < r < 2 $$

provided
\begin{equation}\label{eq.q265}
    q > 2.
\end{equation}

Next we turn to the case $0 < r < \delta.$
Then we can write
$$
 \frac{1}{ r}\left(\phi(r\sqrt{1+t})-\phi(r) \right) \lesssim r^{-1} \mathds{1}_{r^{-2} \leq t} . $$

Further, we have
$$\left| \left( \log \left( r/2\right) +  \log \left( \sqrt{1+t}\right) +\gamma  \right) \sqrt{1+t} \phi^\prime(r\sqrt{1+t}) \right|=$$
$$ \frac{1}{r}\left| \left( \log \left(\sqrt{1+t} r/2\right)  +\gamma  \right) r\sqrt{1+t} \phi^\prime(r\sqrt{1+t}) \right| \lesssim  r^{-1} \mathds{1}_{r^{-2} \sim t} . $$
Finally,
$$ \left|\left( \log \left( r/2\right) + \gamma  \right) \phi^\prime(r)\right| =0, $$
provided $\delta$ is small.
Hence we arrive at
$$ \sum_{|\alpha|=1}|\partial_x^\alpha\Gamma_0 (f) (x)| \lesssim r^{-1}\int_{1/(r^2)}^\infty t^{-1/2} (1+t)^{-1+1/q} dt\ \|f\|_{L^q} \lesssim r^{-2/q}\|f\|_{L^q}$$
provided $r<\delta$ and \eqref{eq.q265} holds.


\end{proof}
We finally consider the operator
\begin{equation}
\begin{aligned}
     & \Gamma_1 (f) (x) = \int_0^\infty t^{-1/2} \varphi_0( \sqrt{t+1} |x|) \left(\int_{\mathbb{R}^2}   \mathbb{G}_{1+t}( |y|) \overline{f(y)}  dy\right) dt,
\end{aligned}
\end{equation}
and we have the following Lemma.
\begin{Lemma}\label{l.gamma1}
    For any $p \in (1,2)$ the operator $\Gamma_1$ maps $L^{p}(\mathbb{R}^2)$ into $H^{1, p}(\mathbb{R}^2).$
    In particular we have the following estimates:
    \begin{equation}\label{eq.gamma1}
    \|\Gamma_1(f)\|_{L^{p}} \lesssim \|f\|_{L^p},
    \end{equation}
and
    \begin{equation}\label{eq.gamma1.2}
    \|\Gamma_1(f)\|_{H^{1,p}} \lesssim \|f\|_{L^p}.
\end{equation}
\end{Lemma}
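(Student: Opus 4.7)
The plan is to follow the same two-step pattern used in the proof of Lemma \ref{l.gamma}: establish the strong $L^p$ bound \eqref{eq.gamma1} directly via Minkowski's integral inequality, derive a pointwise weak-$L^p$ bound for the gradient of $\Gamma_1(f)$, and then upgrade the latter to the strong bound \eqref{eq.gamma1.2} by Marcinkiewicz interpolation. For the first step, I would combine the scaling estimate $\|\varphi_0(\sqrt{1+t}|x|)\|_{L^p_x} \lesssim (1+t)^{-1/p}$ from \eqref{eq.SSE9} with the H\"older bound $|\langle \mathbb{G}_{1+t},f\rangle|\lesssim (1+t)^{-1/p'}\|f\|_{L^p}$ (again through \eqref{eq.SSE9}) and Minkowski's integral inequality, obtaining
\begin{equation*}
\|\Gamma_1(f)\|_{L^p} \lesssim \|f\|_{L^p}\int_0^\infty t^{-1/2}(1+t)^{-1}\,dt,
\end{equation*}
with a convergent integral, which proves \eqref{eq.gamma1}.

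For the derivative estimate, I would first notice that because $\mathrm{supp}\,\varphi_0 \subset \{r\leq 2\}$, one has $\Gamma_1(f)(x) = 0$ whenever $|x|>2$, so the whole analysis is localised near the origin. For $|x|\leq 2$ I would differentiate under the integral sign, using
$\partial_i\varphi_0(\sqrt{1+t}|x|) = \sqrt{1+t}\,\varphi_0'(\sqrt{1+t}|x|)\,x_i/|x|$
together with the pointwise bound $|\varphi_0'(r)| \lesssim 1/r$ valid on $\{r\leq 2\}$, to arrive at
\begin{equation*}
|\partial_i \Gamma_1(f)(x)| \lesssim \frac{\|f\|_{L^p}}{|x|}\int_0^{4/|x|^2} t^{-1/2}(1+t)^{-1/p'}\,dt.
\end{equation*}
Splitting the $t$-integral into $[0,1]$ and $[1,4/|x|^2]$, the second piece equals $\int_1^{4/|x|^2}t^{-1/2-1/p'}\,dt \lesssim |x|^{1-2/p}$ exactly when the exponent $-1/2-1/p'$ belongs to $(-1,0)$, i.e.\ when $p \in (1,2)$. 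Combining everything yields the pointwise bound
\begin{equation*}
|\partial_i\Gamma_1(f)(x)| \lesssim |x|^{-2/p}\mathds{1}_{|x|\leq 2}\|f\|_{L^p},
\end{equation*}
whose right-hand side lies in $L^{p,\infty}(\mathbb{R}^2)$, giving the weak-type estimate $\|\partial_i\Gamma_1(f)\|_{L^{p,\infty}} \lesssim \|f\|_{L^p}$.

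Since this weak-type bound holds for \emph{every} $p$ in the open interval $(1,2)$, I would then apply the Marcinkiewicz interpolation theorem between two nearby values $p_0 < p < p_1$ in $(1,2)$ to upgrade it to the strong bound $\|\partial_i\Gamma_1(f)\|_{L^p} \lesssim \|f\|_{L^p}$, exactly as in the conclusion of the proof of Lemma \ref{l.gamma}. Combined with \eqref{eq.gamma1}, this gives \eqref{eq.gamma1.2}. The main obstacle is the pointwise derivative estimate: one has to exploit both the sharp asymptotics $\varphi_0'(r)\sim -1/(2\pi r)$ near the origin and the compact support of $\varphi_0$ to reduce the $t$-integral to a finite interval whose size is precisely what produces the power $|x|^{-2/p}$, and it is at this step that the restriction $p<2$ appears, as the integrability condition $1/p'<1/2$.
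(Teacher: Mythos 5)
Your proposal is correct, but it follows a genuinely different --- and in fact more robust --- route than the paper's. The paper disposes of this lemma in two sentences: \eqref{eq.gamma1} is proved exactly as you do, by Minkowski's inequality together with the scaling bounds \eqref{eq.SSE9}, while \eqref{eq.gamma1.2} is said to ``follow from \eqref{eq.phi.2}'', i.e.\ from Minkowski's inequality applied directly in the $H^{1,p}$ norm. That direct argument is borderline-divergent: since $\|\varphi_0(\sqrt{1+t}\,|x|)\|_{H^{1,p}_x}\lesssim (1+t)^{1/2-1/p}$ (and this is sharp for the gradient part) while $|\langle\mathbb{G}_{1+t},f\rangle|\lesssim (1+t)^{-1/p'}\|f\|_{L^p}$, the product decays only like $(1+t)^{-1/2}$, so the resulting integral $\int_0^\infty t^{-1/2}(1+t)^{-1/2}\,dt$ diverges logarithmically, and the discarded factor $\beta_\alpha(1+t)^{-1}\sim 1/\log t$ would not rescue it. Your scheme --- a pointwise bound on $\nabla\Gamma_1(f)$ that uses the support condition $\sqrt{1+t}\,|x|\le 2$ to truncate the $t$-integral at $4/|x|^2$, producing the borderline weight $|x|^{-2/p}\mathds{1}_{|x|\le 2}\in L^{p,\infty}\setminus L^p$, followed by Marcinkiewicz interpolation between two exponents $p_0<p<p_1$ inside $(1,2)$ --- is precisely the scheme the paper itself uses for Lemma \ref{l.gamma}, and it avoids the divergence because the truncation in $t$ is exploited before any $x$-norm is taken. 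Your exponent computations check out, the restriction $p<2$ enters exactly where you say it does, and the interpolation step is legitimate since the weak bound holds for every exponent in the open interval. In short, your longer argument actually closes a gap that the paper's one-line justification of \eqref{eq.gamma1.2} leaves open.
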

\begin{proof}
    The proof of \eqref{eq.gamma1} is similar to \eqref{eq.lll82}. The proof of \eqref{eq.gamma1.2} follows from \eqref{eq.phi.2}.
\end{proof}

\section{Sobolev embedding and local well posedness}\label{section.5}

In this section we consider the Cauchy problem \eqref{eq.CP81}
and we shall give alternative proof of the local existence result established in Theorem B.1 in  \cite{FGI22}.

Our first step is the following Sobolev inequality.

\begin{Lemma}\label{l.sem1}
   For any $q \in (2,\infty)$ there is a constant $C=c(q)>0$ so that for any $\phi \in H^1_\alpha$ we have $\phi \in L^q$ and
   \begin{equation}
       \|\phi\|_{L^q(\mathbb{R}^2)} \leq C \|\phi\|_{H^1_\alpha}.
   \end{equation}
\end{Lemma}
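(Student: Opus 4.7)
The plan is to exploit the explicit direct-sum structure of $H^1_\alpha$ provided by \eqref{eq.defh1a81}–\eqref{eq.ns77} and combine it with the classical two-dimensional Sobolev embedding together with the known asymptotics of $\mathbb{G}_\omega$. Concretely, for $\phi\in H^1_\alpha$ write $\phi=g+c\,\mathbb{G}_\omega$ with $g\in H^1(\mathbb{R}^2)$, $c\in\mathbb{C}$, so that $\|\phi\|_{H^1_\alpha}^2=\|g\|_{H^1}^2+|c|^2$. Then bound the two summands separately by the triangle inequality
\begin{equation}
\|\phi\|_{L^q} \;\leq\; \|g\|_{L^q}+|c|\,\|\mathbb{G}_\omega\|_{L^q}.
\end{equation}

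For the regular part, the classical Sobolev embedding in dimension two gives $H^1(\mathbb{R}^2)\hookrightarrow L^q(\mathbb{R}^2)$ for every $q\in[2,\infty)$, hence $\|g\|_{L^q}\lesssim \|g\|_{H^1}\leq \|\phi\|_{H^1_\alpha}$. For the singular part, recall from \eqref{eq:defGlambda47} that $\mathbb{G}_\omega(x)=(2\pi)^{-1}K_0(\sqrt{\omega}|x|)$. The standard asymptotics of the modified Bessel function, $K_0(r)\sim-\log r$ as $r\to 0^+$ and $K_0(r)\sim \sqrt{\pi/(2r)}\,e^{-r}$ as $r\to\infty$, show that the singularity at the origin is merely logarithmic (locally in $L^q$ for every $q<\infty$) and the decay at infinity is exponential. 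Therefore $\mathbb{G}_\omega\in L^q(\mathbb{R}^2)$ for every $q\in[1,\infty)$ with a norm depending only on $\omega$ and $q$, and we obtain
\begin{equation}
\|\phi\|_{L^q} \;\lesssim\; \|g\|_{H^1}+|c| \;\lesssim\; \big(\|g\|_{H^1}^2+|c|^2\big)^{1/2}=\|\phi\|_{H^1_\alpha},
\end{equation}
with a constant $C=C(q,\omega)$ of the required form.

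The only point that deserves a brief comment is that the norm $\|\phi\|_{H^1_\alpha}^2=\|g\|_{H^1}^2+|c|^2$ is well defined only because the decomposition $\phi=g+c\,\mathbb{G}_\omega$ is unique. This uniqueness, which is also the reason no genuine obstacle appears in the proof, follows from $\mathbb{G}_\omega\notin H^1(\mathbb{R}^2)$: a direct computation shows $\partial_r\mathbb{G}_\omega(r)\sim -1/(2\pi r)$ as $r\to 0^+$, which fails to be in $L^2(\mathbb{R}^2,r\,dr)$. Hence the sum $H^1(\mathbb{R}^2)+\mathbb{C}\,\mathbb{G}_\omega$ is direct, the coefficient $c$ and the regular component $g$ are determined by $\phi$, and the estimates above really do bound $\|\phi\|_{L^q}$ in terms of $\|\phi\|_{H^1_\alpha}$. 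No interpolation, no commutator estimate and no appeal to the $L^p$ resolvent theory of Section~\ref{s.lp} is needed; the lemma is a structural consequence of the decomposition plus the two-dimensional Sobolev embedding.
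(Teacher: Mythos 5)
Your proof is correct and follows essentially the same route as the paper: decompose $\phi=g+c\,\mathbb{G}_\omega$ via \eqref{eq.defh1a81}, apply the classical two-dimensional Sobolev embedding to $g$, and use that $\mathbb{G}_\omega\in L^q(\mathbb{R}^2)$ for all $q<\infty$ thanks to the logarithmic singularity at the origin and the exponential decay of $K_0$ at infinity. The additional remark on the uniqueness of the decomposition is a welcome clarification but does not change the argument.
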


\begin{proof}
    We know from \eqref{eq.defh1a81} that
    $$ \phi = g + c_* \mathbb{G}_\omega, g \in H^1.$$
    Since the classical Sobolev embedding implies
    $$ \|g\|_{L^q(\mathbb{R}^2)} \leq C \|g\|_{H^1_\alpha} $$
     moreover $G_\omega \in L^q.$ Hence,
     $$ \|\phi\|_{L^q} \lesssim \|g\|_{H^1_\alpha} + |c_*| \sim  \|\phi\|_{H^1_\alpha}.$$
\end{proof}

Further we recall the Strichartz estimates for $\Delta_\alpha$ that are obtained in \cite{CMY19,CMY19b}.

\begin{equation} \label{eq.str710}
    \begin{aligned}
& \left\| e^{i t \Delta_\alpha} f \right\|_{L^q(0,T)L^r} \lesssim \|f\|_{L^2}, \\
& \left\|\int_0^t e^{i(t-\tau)\Delta_\alpha} F(\tau) d\tau\right\|_{L^q(0,T)L^r} \lesssim  \left\| F\right\|_{L^{\tilde{q}^\prime}(0,T)L^
{\tilde{r}^\prime}},
    \end{aligned}
\end{equation}
provided
\begin{equation}\label{eq.adm18}
    \frac{1}{q} + \frac{1}{r} = \frac{1}{2}, \ q \in (2, \infty], \ \frac{1}{\tilde{q}} + \frac{1}{\tilde{r}} = \frac{1}{2}, \ \tilde{q} \in (2, \infty]
\end{equation}

It is easy to obtain local well - posedness of the problem \eqref{eq.CP81} in the mass subcritical case.

\begin{proof}[Proof of Theorem \ref{t.le19}]
 Consider the operator
 \begin{equation}\label{eq.kop37}
     \mathfrak{K} (u) = e^{it \Delta_\alpha} u_0 -i \int_0^t e^{i(t-\tau)\Delta_\alpha}  u(\tau)|u(\tau)|^{p-1} d\tau
 \end{equation}
    and define the Banach space $$L^\infty(0,T)L^2 \cap L^{\tilde{q}^\prime}(0,T) L^{\tilde{r}^\prime}$$ with

Applying the Strichartz estimate with
\begin{equation}\label{tqr46}
  \tilde{r} =2 ,\ \tilde{q}= \infty ,
\end{equation}
we get
  $$ \left\| \mathfrak{K}(u)\right\|_{L^q(0,T)L^r} \lesssim \|u_0\|_{L^2} + \left\| u|u|^{p-1}\right\|_{L^{1}(0,T)L^{2}  }  \lesssim \|u_0\|_{L^2} + \left\|u\right\|^p_{L^{p}(0,T)L^{2p}  } . $$
  so we can choose
  \begin{equation}
      \begin{aligned}
          & r=2p, \\
          & q = \frac{2p}{p-1}.
      \end{aligned}
  \end{equation}
so that $(q,r)$ is admissible couple satisfying \eqref{eq.adm18}. Now we need
 $$ \left\|u\right\|_{L^{p}(0,T)L^{2p}  }  \lesssim T^{\alpha}  \left\|u\right\|_{L^{q}(0,T)L^{2p}  }  $$
 with $\alpha=(3-p)/(2p)>0$ and this can be done if and only if
 $$ p < q = \frac{2p}{p-1} $$
 that is $p<3.$ The estimate
 \begin{equation}\label{eq.sest60}
     \left\| \mathfrak{K}(u)\right\|_{L^{2p/(p-1)}(0,T)L^{2p}} \lesssim \|u_0\|_{L^2} + T^{p\alpha} \left\| \mathfrak{K}(u)\right\|^p_{L^{2p/(p-1)}(0,T)L^{2p}}
 \end{equation}
    shows that $\mathfrak{K} $ maps
  \begin{equation}\label{eq.2R62}
     \left\{ u \in L^{2p/(p-1)}(0,T)L^{2p}; \left\| u\right\|_{L^{2p/(p-1)}(0,T)L^{2p}} \leq 2R \right\}
  \end{equation}
  provided $u_0 \in B_{L^2}(R)$
and $T=T(R,p)$ is sufficiently small.
In a similar way we deduce
$$ \left\| \mathfrak{K}(u)-\mathfrak{K}(\tilde{u})\right\|_{L^{2p/(p-1)}(0,T)L^{2p}} \lesssim \frac{1}{2} \left\| u-\tilde{u}\right\|_{L^{2p/(p-1)}(0,T)L^{2p}}  $$
so $\mathfrak{K}$ is a contraction in \eqref{eq.2R62}.

Observing that the estimate \eqref{eq.sest60} and Strichartz estimates imply
\begin{equation}\label{eq.sest75}
     \left\| \mathfrak{K}(u)\right\|_{L^{q}(0,T)L^{r}} \lesssim \|u_0\|_{L^2} + T^{p\alpha} \left\| u\right\|^p_{L^{2p/(p-1)}(0,T)L^{2p}}
 \end{equation}
 for any admissible couple, we complete the proof.

\end{proof}

Our Theorem \ref{t.2.1} guarantees the more general Strichartz estimates
\begin{equation} \label{eq.SH123}
    \begin{aligned}
& \left\| e^{i t \Delta_\alpha} f \right\|_{L^q(0,T)H^{1,r}_\alpha} \lesssim \|f\|_{H^1_\alpha}, \\
& \left\|\int_0^t e^{i(t-\tau)\Delta_\alpha} F(\tau) d\tau\right\|_{L^q(0,T)H^{1,r}_\alpha} \lesssim  \left\| F\right\|_{L^{\tilde{q}^\prime}(0,T)H^{1,{\tilde{r}^\prime}}_\alpha  }
    \end{aligned}
\end{equation}

Our next local existence result treats the case $p \geq 3$.
\begin{Theorem}\label{t.lems8}
    For any $p\geq 3$ and any $R>0$ there  exists $T=T(R,p)>0$ so that for any
    $$ u_0 \in B(R) = \left\{ \phi \in H^1_\alpha; \|\phi\|_{H^1_\alpha} \leq R \right\}$$
    there exists a unique solution
    $$ u \in C([0,T]; H^1_\alpha)$$
    to the integral equation
\begin{equation}
    u = e^{it \Delta_\alpha} u_0 -i \int_0^t e^{i(t-\tau)\Delta_\alpha}  u(\tau)|u(\tau)|^{p-1} d\tau.
\end{equation}
    associated to \eqref{eq.CP81}.
\end{Theorem}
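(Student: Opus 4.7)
The plan is to run a Picard-type contraction for the Duhamel operator
$\mathfrak{K}(u) = e^{it\Delta_\alpha} u_0 - i \int_0^t e^{i(t-\tau)\Delta_\alpha}\, u(\tau)|u(\tau)|^{p-1}\, d\tau$
in a suitable resolution space built on $H^1_\alpha$. Concretely, fix an admissible Strichartz pair $(q,r)$ as in \eqref{eq.adm18} with $r>2$ sufficiently close to $2$, so that the dual exponent $\tilde r' = r'$ lies in $(1,2)$ and is close enough to $2$ that the composition estimate \eqref{eq.co119} applies. Define
\begin{equation*}
X_T = \Bigl\{ u \in L^\infty\bigl([0,T]; H^1_\alpha\bigr) \cap L^q\bigl([0,T]; H^{1,r}_\alpha\bigr) \Bigr\},
\end{equation*}
equipped with its natural norm, and consider the closed ball $B_T(2R) \subset X_T$.

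The first step is to combine the generalized Strichartz estimates \eqref{eq.SH123} (valid thanks to Theorem \ref{t.2.1}) with the nonlinear estimate \eqref{eq.co119}. Applying \eqref{eq.SH123} with dual pair $(\tilde q', \tilde r')$ where $\tilde r' = r'$ gives
\begin{equation*}
\|\mathfrak{K}(u)\|_{L^\infty H^1_\alpha \cap L^q H^{1,r}_\alpha}
\lesssim \|u_0\|_{H^1_\alpha} + \bigl\|\,u|u|^{p-1}\bigr\|_{L^{\tilde q'}(0,T;\,H^{1,\tilde r'}_\alpha)}.
\end{equation*}
The pointwise (in time) estimate \eqref{eq.co119} then yields
$\|u|u|^{p-1}\|_{H^{1,\tilde r'}_\alpha} \lesssim \|u\|_{H^1_\alpha}^p$, and a Hölder inequality in time produces a factor $T^{1/\tilde q'}$ (noting that $\tilde q' \in (1,2)$ since $\tilde q$ is large when $\tilde r$ is close to $2$), so that
\begin{equation*}
\bigl\|u|u|^{p-1}\bigr\|_{L^{\tilde q'}(0,T;\,H^{1,\tilde r'}_\alpha)}
\lesssim T^{\theta}\, \|u\|_{L^\infty(0,T; H^1_\alpha)}^{p}
\end{equation*}
for some $\theta>0$. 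Choosing $T=T(R,p)$ sufficiently small then makes $\mathfrak{K}$ map $B_T(2R)$ into itself.

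The second step is the difference estimate. Using the pointwise bound
$\bigl||u|^{p-1}u-|v|^{p-1}v\bigr| \lesssim \bigl(|u|^{p-1}+|v|^{p-1}\bigr)|u-v|$
together with the $H^{1,\tilde r'}_\alpha$-analogue of \eqref{eq.co119} applied to the difference (which follows from the same decomposition regular$+$singular of Theorem \ref{t.2.2}, combined with the Sobolev embedding $H^1_\alpha \hookrightarrow L^q$ from Lemma \ref{l.sem1}), one obtains
\begin{equation*}
\bigl\||u|^{p-1}u - |v|^{p-1}v\bigr\|_{H^{1,\tilde r'}_\alpha}
\lesssim \bigl(\|u\|_{H^1_\alpha}^{p-1} + \|v\|_{H^1_\alpha}^{p-1}\bigr)\|u-v\|_{H^1_\alpha}.
\end{equation*}
Integrating in time with the same Hölder argument gives
$\|\mathfrak{K}(u)-\mathfrak{K}(v)\|_{X_T} \leq \tfrac12 \|u-v\|_{X_T}$, after possibly shrinking $T$. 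The Banach fixed point theorem then supplies the unique $u\in X_T$ solving the integral equation, and continuity $u\in C([0,T]; H^1_\alpha)$ follows from the standard continuity of the Schrödinger group on Strichartz spaces.

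The main obstacle is the nonlinear estimate \eqref{eq.co119} and its difference form in the perturbed scale $H^{1,\tilde r'}_\alpha$. The regular part of $u|u|^{p-1}$ can be handled by the classical chain rule in $H^{1,\tilde r'}$ combined with Hölder and the Sobolev embedding $H^1_\alpha \hookrightarrow L^s$ for every $s\in(2,\infty)$. The delicate point is that, since $\tilde r'<2$, the singular part $\mathbb{G}_\omega$ does belong to $H^{1,\tilde r'}$, so one must keep careful track of the coefficient of the Green function when decomposing $u|u|^{p-1}$ according to \eqref{eq.rappresentazione}; for $p\geq 3$ the nonlinearity is smooth enough (in particular $C^1$ as a function of $u$ when $p>1$) for this tracking to close, and the admissible window of $r$ close to $2$ is precisely what \eqref{eq.co119} requires.
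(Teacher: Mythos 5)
Your overall architecture is the same as the paper's: a Picard contraction for $\mathfrak{K}$ driven by the $H^{1,r}_\alpha$-Strichartz estimates \eqref{eq.SH123}, with the dual exponent $\tilde r'$ taken in $(1,2)$ close to $2$ so that the nonlinear estimate \eqref{eq.co119} can be invoked and a positive power of $T$ extracted by H\"older in time. The paper works in the simpler space $L^\infty(0,T)H^1_\alpha$ rather than your intersection with $L^qH^{1,r}_\alpha$, but that is a cosmetic difference.

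The genuine issue is that you treat \eqref{eq.co119} as an available black box, whereas it is proved nowhere else in the paper: its derivation \emph{is} the substance of the proof of this theorem, and your sketch of how one would obtain it mislocates the difficulty. Since $\tilde r'<2$, the second part of Theorem \ref{t.2.1} (estimate \eqref{eq.stimaellittica}) identifies $H^{1,\tilde r'}_\alpha$ with the classical $H^{1,\tilde r'}$, so there is no regular-plus-singular decomposition of the \emph{output} $u|u|^{p-1}$ to track; the representation \eqref{eq.rappresentazione} and Theorem \ref{t.2.2}, which you invoke for the difference estimate, apply only for exponents greater than $2$ and are not relevant here. The real obstruction sits in the \emph{input}: writing $u=g+c_*\mathbb{G}_\omega$ as in \eqref{eq.defh1a81}, the gradient term $c_*\nabla\mathbb{G}_\omega$ behaves like $1/|x|$ near the origin and fails to lie in $L^2$, so the product $|\nabla u||u|^{p-1}$ cannot be placed in $L^{\tilde r'}$ by the ``classical chain rule plus $\nabla u\in L^2$'' route you describe for the regular part. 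The paper closes this by splitting
\begin{equation}
\left\| |\nabla u||u|^{p-1}\right\|_{L^{\tilde{r}'} } \leq \left\| |\nabla g||u|^{p-1}\right\|_{L^{\tilde{r}'} } + |c_*|\left\| |\nabla \mathbb{G}_\omega||u|^{p-1}\right\|_{L^{\tilde{r}'} }
\end{equation}
and estimating the singular piece by H\"older with $\nabla\mathbb{G}_\omega\in L^{2-\varepsilon/2}$ and $u\in L^{s}$ for large $s$ from Lemma \ref{l.sem1}, which is exactly where the constraint $\tilde r'=2-\varepsilon<2$ is consumed; the bound $|c_*|\lesssim\|u\|_{H^1_\alpha}$ from \eqref{eq.ns77} then closes the power $\|u\|^p_{H^1_\alpha}$. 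Without carrying out this step (and its analogue for differences), the contraction does not close, so you should either reproduce this computation or point to it explicitly rather than to \eqref{eq.rappresentazione}.
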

\begin{proof}
    Consider the operator
  $$  \mathfrak{K} (u) = e^{it \Delta_\alpha} u_0 -i \int_0^t e^{i(t-\tau)\Delta_\alpha}  u(\tau)|u(\tau)|^{p-1} d\tau. $$

   Further, we define  the Banach space $\mathcal{B}=L^\infty(0,T)H^1_\alpha $ and the corresponding ball of radius $R$
  $$ B_{\mathcal{B}} = \left\{u \in \mathcal{B} ; \|u\|_{\mathcal{B}} \leq R \right\}.$$

  Applying the Strichartz estimate \eqref{eq.SH123}, we find
   $$ \left\| \mathfrak{K}(u)\right\|_{L^q(0,T)H^1_\alpha} \lesssim \|u_0\|_{H^1_\alpha} + \left\| u|u|^{p-1}\right\|_{L^{\tilde{q}^\prime}(0,T)H^{1, \tilde{r}^\prime}_\alpha  } .  $$

  Now we choose

  \begin{equation}\label{eq.rq}
  \tilde{r} = \frac{2-\varepsilon}{1-\varepsilon},\ \tilde{q}= \frac{4-2\varepsilon}{\varepsilon}
\end{equation}
  so that
  \begin{equation}\label{eq.rqprime}
  \tilde{r}^\prime = 2-\varepsilon,\ \tilde{q}^\prime= \frac{\varepsilon}{3\varepsilon-4}  .
\end{equation}
Since $\tilde{r}^\prime <2,$ we see that  Theorem \ref{t.2.1} implies
$$ \left\| u|u|^{p-1}\right\|_{H^{1,{\tilde{r}^\prime}}_\alpha  } \sim \left\| u|u|^{p-1}\right\|_{H^{1,{\tilde{r}^\prime}}  } \sim
\left\| u|u|^{p-1}\right\|_{L^{\tilde{r}^\prime} } + \left\| \nabla u|u|^{p-1}\right\|_{L^{\tilde{r}^\prime} }$$
Now we use the fact that $u \in H^1_\alpha$
$$  u= g + c_*G_\omega$$
and we can continue the estimates  as follows

$$ \left\| |\nabla u||u|^{p-1}\right\|_{L^{\tilde{r}^\prime} } \leq \left\| |\nabla g||u|^{p-1}\right\|_{L^{\tilde{r}^\prime} } +
|c_*|\left\| |\nabla \mathbb{G}_\omega||u|^{p-1}\right\|_{L^{\tilde{r}^\prime} }.$$
Then we estimate each of the terms in the right side and find
$$\left\| |\nabla g||u|^{p-1}\right\|_{L^{\tilde{r}^\prime} }\lesssim\|\nabla  g\|_{L^2}\|u\|^{(p-1)}_{L^{\frac{2(2-\varepsilon)}{\varepsilon}(p-1)}}\lesssim \|\nabla  g\|_{L^2}\|u\|^{p-1}_{H^1_\alpha},$$

$$\left\| |\nabla \mathbb{G}_\omega||u|^{p-1}\right\|_{L^{\tilde{r}^\prime} }\lesssim\|\nabla \mathbb{G}_\omega\|_{L^{2-\frac{\varepsilon}{2}}}\|u\|_{L^{\frac{(2-\varepsilon)(4-\varepsilon)}{\varepsilon}(p-1)}}^{(p-1)}\lesssim \|u\|^{p-1}_{H^1_\alpha}.$$

Hence
\begin{equation}
 \left\| |\nabla u||u|^{p-1}\right\|_{L^{\tilde{r}^\prime} } \lesssim    (\left\| \|\nabla  g\|_{L^2}+ |c_*| \right) \|u\|^{p-1}_{H^1_\alpha}
\end{equation}
and via \eqref{eq.ns77} we get
\begin{equation}
 \left\| |\nabla u||u|^{p-1}\right\|_{L^{\tilde{r}^\prime} } \lesssim     \|u\|^{p}_{H^1_\alpha}.
\end{equation}

So

$$ \left\| \mathfrak{K}(u)\right\|_{L^q(0,T)H^{1,r}_\alpha} \lesssim \|u_0\|_{H^1_\alpha} +  T^{p/\tilde{q}'}\|u\|^{p}_{L^\infty(0,T) H^1_\alpha}$$

In a similar way we deduce
$$ \left\| \mathfrak{K}(u)-\mathfrak{K}(\tilde{u})\right\|_{L^{\infty}(0,T) H^1_\alpha} \lesssim \frac{1}{2} \left\| u-\tilde{u}\right\|_{L^\infty(0,T) H^1_\alpha}  $$
so $\mathfrak{K}$ is a contraction in $L^\infty(0,T)H^1_\alpha. $

\end{proof}
\appendix
\section{Rescaling}

\label{sec.res2}

We recall the rescaling argument from section  5 in \cite{FGI22}.
Let
\begin{equation}
    \phi(x) = g(x) + \frac{g(0)}{\beta_\alpha(\omega)} \mathbb{G}_\omega(x) \in \mathcal D(\Delta_\alpha)
\end{equation}
so that
\begin{equation}
    (\omega -\Delta_\alpha) \phi = \Phi
\end{equation}
and
\begin{equation}
    \begin{aligned}
      &  \tilde{\phi} (x)  = \phi \left( \frac{x}{\sqrt{\omega}}\right), \\
       &  \tilde{\Phi} (x)  = \Phi \left( \frac{x}{\sqrt{\omega}}\right), \\
      &\tilde{g} (x)  = g \left( \frac{x}{\sqrt{\omega}}\right),\\
      & \tilde{\alpha} = \alpha + \frac{1}{4\pi} \ln (\omega).
    \end{aligned}
\end{equation}
It is easy to deduce
\begin{equation}
    \begin{aligned}
       & \beta_\alpha(\omega) = \beta_{\tilde{\alpha}}(1), \\
       & \tilde{\phi} (x) = \tilde{g} (x)  +   \frac{\tilde{g}(0)}{\beta_{\tilde{\alpha}}(1)} \mathbb{G}_1(x)
    \end{aligned}
\end{equation}
and moreover
\begin{equation}\label{eq.rescalingDelta}
    \widetilde{(\omega-\Delta_\alpha) \phi} = \omega (1-\Delta_{\tilde{\alpha}}) \tilde{\phi}.
\end{equation}
Applying the spectral theorem we find
\begin{equation}\label{eq.rescalingDelta0.5}
    \widetilde{(\omega-\Delta_\alpha)^{s/2}\phi} = \omega^{s/2} (1-\Delta_{\tilde{\alpha}})^{s/2} \tilde{\phi}, \ \forall s \in [0,2].
\end{equation}

We turn to the rescaling of the   linear Schr\"odinger  equation:
$$(i\partial_t+\Delta_\alpha)u=0.$$
With the change of variable
$$y=\frac{x}{\sqrt{\omega}}, \ \ s=\frac{t}{\omega},$$
it is easy to see that
$$u\left(\frac{x}{\sqrt{\omega}},\frac{t}{\omega}\right)=e^{it\Delta_{\tilde\alpha}}\tilde{u_0}.$$
This change, works also for the NLS
$$(i\partial_t+\Delta_\alpha)u=   \mu u|u|^{p-1}.$$

We consider the rescaling $S_\lambda(u)(t,x) = \lambda^{-1} u \left(\frac{t}{\lambda^2}, \frac{x}{\lambda} \right)$
we have that
$$    \|S_\lambda(u)(t)\|^2_{L^{2}(\mathbb{R}^2)} =    \|u(t)\|^2_{L^{2}(\mathbb{R}^2)}$$
and
$$\lambda^{-1}(\omega-\Delta_\alpha)u(\frac{t}{\lambda^2},\frac{x}{\lambda})=\lambda(\omega-\Delta_{\alpha+\frac{1}{2\pi}\ln(\lambda)})(u(\frac{t}{\lambda^2},\frac{x}{\lambda})),$$

that gives

\begin{equation}
    \|S_\lambda(u)(t)\|^2_{L^2(\mathbb{R}^2)} = \|u(t)\|^2_{L^2(\mathbb{R}^2)}
\end{equation}
and
\begin{equation}
\begin{aligned}
     E(S_\lambda(u))=&\frac{1}{2}\int_{\mathbb{R}^2} (\omega-\Delta_\alpha) S_\lambda(u)(t) \overline{S_\lambda(u)(t)} -\frac{\omega}{2}\|S_\lambda (u)(t)\|^{2}_{L^{2}(\mathbb{R}^2)}+ \frac{\mu}{p+1} \|S_\lambda (u)(t)\|^{p+1}_{L^{p+1}(\mathbb{R}^2)}=\\
     &=\lambda^{-2}\frac{1}{2}\|(\omega-\Delta_{\alpha^*})^{1/2}u(t)\|_{L^2}^2-\lambda^{-2}\frac{\omega}{2}\|u(t)\|^{2}_{L^{2}(\mathbb{R}^2)}+\lambda^{1-p}\frac{\mu}{p+1} \| u(t)\|^{p+1}_{L^{p+1}(\mathbb{R}^2)},
\end{aligned}
\end{equation}
where $\alpha^*=\alpha+\frac{1}{2\pi}\ln(\lambda)$. Note that $\alpha^* > \alpha$ if $\lambda>1.$

Now it is clear that the mass  critical case is defined by  $\lambda^{-2} = \lambda^{1-p}$ with $\lambda >1$ so
mass critical case is $p=3.$

\section{Asymptotics}
\label{sec.as8}

The function $K_0 ( \sqrt{z}|x|) $ has the asymptotics
\begin{equation}
    K_0( \sqrt{z}r) = - \log \left(r\right) - c(\omega) + O(|\sqrt{z}|r), \ \  \ c(\omega)= \log\left(\frac{\sqrt{\omega}}{2} \right)+\gamma
\end{equation}
where $$r|\sqrt{z}| \leq 1, \ \ z \in \mathbb{C}, \mathrm{arg}(z) \in (-\pi, \pi). $$
This follows from the following asymptotic expansion (see (38), p.9 in \cite{BE})
\begin{equation}
    \begin{aligned}
      & K_0(z)=-I_0(z) \log \left(\frac{z}{2}\right)+ \sum_{m=0}^{\infty}  \left(\frac{z}{2}\right)^{2 m} \frac{\psi(m+1)}{ \left[(m !)^2\right]}= \\
      = & -  \log \left(\frac{z}{2}\right) -\gamma + O(\log(1/|z|) |z|^2), \ \ |z|\leq 1,
    \end{aligned}
\end{equation}
where $\psi(z)=\Gamma^\prime(z)/\Gamma(z)$ and $\gamma$ is the Euler-Mascheroni constant.
We have also
\begin{equation}
    \begin{aligned}
      & K_0^\prime(z)=-\frac{2}{z} + O(\log(1/|z|) |z|), \ \ |z|\leq 1, \\
      & K_0^{\prime\prime}(z)=\frac{2}{z} + O(\log(1/|z|) ), \ \ |z|\leq 1,
    \end{aligned}
\end{equation}

We have the following asymptotic expansion valid if $|z| \to \infty$ and $\mathrm{arg} z \in (-\pi, \pi)$ (see relation (20), section 7.23 in \cite{W95})
\begin{equation}\label{eq.Bf7a23}
    K_\nu(z) = \left( \frac{\pi}{2} \right)^{1/2} e^{- (\log|z| +\mathrm{i}\mathrm{arg} z)/2} e^{-z} \left( 1+ O(|z|^{-1}) \right).
\end{equation}

\bibliographystyle{habbrv}
 \bibliography{SING_PERT_GR}
\end{document}